\definecolor{red}{rgb}{1,0,0}
\tikzstyle{vertex}=[circle, draw, inner sep=0pt, minimum size=6pt]
\tikzstyle{rvertex}=[circle, red, fill, draw, inner sep=0pt, minimum size=6pt]
\tikzstyle{gvertex}=[circle, green, fill, draw, inner sep=0pt, minimum size=6pt]
\tikzstyle{bvertex}=[circle, blue, fill, draw, inner sep=0pt, minimum size=6pt]
\tikzstyle{Bvertex}=[circle, black, fill, draw, inner sep=0pt, minimum size=6pt]
\newcommand{\vertex}{\node[vertex]}
\newcommand{\Bvertex}{\node[Bvertex]}
\def\noi{\noindent}
\newcommand{\mr}{\operatorname{mr}}
\newcommand{\PC}{\operatorname{P}}
\newcommand{\TC}{\operatorname{T}}
\newcommand{\tw}{\operatorname{tw}}
\newcommand{\mrp}{\operatorname{mr}_+}
\newcommand{\Mp}{\operatorname{M}_+}
\newcommand{\ZFN}{\operatorname{Z}}
\newcommand{\Zp}{\operatorname{Z}_+}
\newcommand{\nullity}{\operatorname{null}}
\newtheorem{thm}{Theorem}
\newtheorem{defn}[thm]{Definition}
\newtheorem{prop}[thm]{Proposition}
\newtheorem{cor}[thm]{Corollary}
\newtheorem{lem}[thm]{Lemma}
\newtheorem{conj}[thm]{Conjecture}
\newtheorem{obs}[thm]{Observation}
\newcommand{\bit}{\begin{itemize}}
	\newcommand{\eit}{\end{itemize}}
\newcommand{\ben}{\begin{enumerate}}
	\newcommand{\een}{\end{enumerate}}
\newcommand{\beq}{\begin{equation}}
\newcommand{\eeq}{\end{equation}}
\newcommand{\bea}{\begin{eqnarray*}}
	\newcommand{\eea}{\end{eqnarray*}}
\newcommand{\bpf}{\begin{proof}}
	\newcommand{\epf}{\end{proof}}
\title{On the tree cover number and the positive semidefinite maximum nullity of a graph}
\author{Chassidy Bozeman\thanks{Department of Mathematics and Statistics, Mount Holyoke College,  South Hadley, MA 01075, USA (cbozeman@mtholyoke.edu)}}
\date{}
\begin{document}
	
	\maketitle {}

	
	
	%
	
	
	
	
	
	
	
	%
	
	
	
	
	
	
	

	\abstract {For a simple graph $G=(V,E),$ let $\mathcal{S}_+(G)$ denote the set of real positive semidefinite matrices $A=(a_{ij})$ such that $a_{ij}\neq 0$ if $\{i,j\}\in E$ and $a_{ij}=0$ if $\{i,j\}\notin E$. The maximum positive semidefinite nullity of $G$, denoted $\Mp(G),$ is $\max\{\nullity(A)|A\in \mathcal{S}_+(G)\}.$ A tree cover of $G$ is a collection of  vertex-disjoint simple trees occurring as induced subgraphs of $G$ that cover all the vertices of $G$. The tree cover number of $G$, denoted $T(G)$, is the cardinality of a minimum tree cover. It is known that the tree cover number of a graph and the maximum positive semidefinite nullity of a graph are equal for outerplanar graphs, and it was conjectured in 2011 that $T(G)\leq \Mp(G)$ for all graphs [Barioli et al., Minimum semidefinite rank of outerplanar graphs and the tree cover number, {\em Elec. J. Lin. Alg.,} 2011]. We show that the conjecture is true for certain graph families. Furthermore, we prove bounds on $T(G)$ to show that if $G$ is a connected outerplanar graph on $n\geq 2$ vertices, then $\Mp(G)=T(G)\leq \left\lceil\frac{n}{2}\right\rceil$, and if $G$ is a connected outerplanar graph on $n\geq 6$ vertices with no three or four cycle, then $\Mp(G)=T(G)\leq \frac{n}{3}$. We also characterize connected outerplanar graphs with $\Mp(G)=T(G)=\left\lceil\frac{n}{2}\right\rceil.$}
	
\section{Introduction}  A {\em graph} is a pair $G=(V,E)$ where $V$ is the vertex set and $E$ is the set of edges (two element subsets of the vertices). All graphs discussed are simple (no loops or multiple edges) and finite. For a graph $G=(V,E)$ on $n$ vertices, we use $\mathcal{S}_+(G)$ to denote the set of real $n\times n$  positive semidefinite matrices $A=(a_{ij})$ satisfying $a_{ij}\neq 0$ if and only if $\{i,j\}\in E$, for $i\neq j$, and $a_{ii}$ is any non negative real number. The maximum positive semidefinite nullity of $G$, denoted $\Mp(G),$ is defined as $\max\{\nullity(A) |A\in \mathcal{S}_+(G)\}$. The minimum positive semidefinite rank of $G$, denoted $\mr_+(G),$ is defined as $\min\{{\rm rank}(A) |A\in \mathcal{S}_+(G)\},$ and it follows from the Rank-Nullity Theorem that $\Mp(G)+\mrp(G)=n$. Barioli et al. \cite{BFMN11} define a {\em tree cover} of $G$ to be a collection of vertex-disjoint simple trees occurring as induced subgraphs of $G$ that cover all the vertices of $G$. The {\em tree cover number} of $G$, denoted $T(G)$, is the cardinality of a minimum tree cover, and it is used as a tool for studying the positive semidefinite maximum nullity of $G$. (In their paper \cite{BFMN11}, $G$ is allowed to be a multigraph, but we restrict ourselves to simple graphs.) It was conjectured in \cite{BFMN11} that $T(G)\leq \Mp(G)$ for all graphs, and it is shown there that $T(G)=\Mp(G)$ for outerplanar graphs.  

We show that  $T(G)\leq \Mp(G)$ for certain families of graphs in Section \ref{sectionconjecturetrue}. In Section \ref{sectionbounds}, we study $T(G)$ for connected graphs with girth at least 5 and deduce bounds on $\Mp(G)$ for connected outerplanar graphs with girth at least 5.  In section \ref{sectionouterplanar}, we characterize connected outerplanar graphs on $n$ vertices having positive semidefinite maximum nullity and tree cover number equal to the upper bound of $\left\lceil\frac{n}{2}\right\rceil$. 

\subsection{Graph theory terminology} 

For a graph $G=(V,E)$ and $v\in V$, the {\em neighborhood} of $v$, denoted $N(v),$ is the set of vertices adjacent to $v$. The degree of $v$ is the cardinality of $N(v)$ and is denoted by $\deg(v)$. A vertex of degree one is called a {\em leaf}. A set $S\subseteq V$ is {\em independent} if no two of the vertices of $S$ are adjacent. 

The {\em path} $P_n$ is the graph with vertex set $\{v_1,\ldots,v_n\}$ and edge set $\{\{v_i,v_{i+1}\}| i\in \{1,\ldots,n-1\}\}$. The {\em cycle} $C_n$ is formed by adding the edge $\{v_n,v_1\}$ to $P_n$. The {\em girth} of a graph is the size of the smallest cycle in the graph. We denote the graph on $n$ vertices containing every edge possible by $K_n$, and we use $K_{s,t}$ to denote the {\em complete bipartite graph}, the graph whose vertex set may be partitioned into two independent sets $X$ and $Y$ such that $|X|=s, |Y|=t$, for each $x\in X$ and $y\in Y$, $\{x,y\}$ is an edge, and each edge has one endpoint in $X$ and one endpoint in $Y$.  The graph $K_{1,3}$ is referred to as a {\em claw} and more generally, the graph $K_{1,t}$ is called a {\em star}.

For a graph $G=(V,E)$, a  graph $G'=(V',E')$ is a {\em subgraph } of $G$ if $V'\subseteq V$ and $E'\subseteq E$. A subgraph $G'$ is an {\em induced subgraph} of $G$ if $V(G')\subseteq V(G)$ and $E(G')=\{\{u,v\}| \{u,v\}\in E(G) \text{ and } u,v\in V(G')\}$. If $S\subseteq V(G)$, then we use $G[S]$ to denote the subgraph induced by $S$. For $S\subseteq V(G),$ we use $G-S$ to denote $G[V(G)\setminus S]$, and for $e\in E$, $G-e$ denotes the graph obtained by deleting $e$. For a graph $G$ and an induced subgraph $H$, $G-H$ denotes the graph that results from $G$ by deleting $V(H)$. A graph $H=(V(H),E(H))$ is a {\em clique} if for each $u,v\in V(H), \{u,v\}\in E(H).$ The {\em clique number} of $G$, denoted $\omega(G)$, is $\omega(G)=\max\{|V(H)|: H \text{ is a subgraph of } G \text{ and } H \text{ is a clique}\}.$ The {\em independence number} of $G$, denoted $\alpha(G)$, is the cardinality of a maximum independent set. 

A graph is {\em connected} if there is a path from any vertex to any other vertex. For a connected graph $G=(V,E)$, an edge $e\in E$ is called a {\em bridge} if $G-e$ is disconnected. We {\em subdivide} an edge $e=\{u,w\}\in E$ by removing $e$ and adding a new vertex $v_e$ such that $N(v_e)=\{u,w\}.$  

A graph $G=(V,E)$ is {\em outerplanar} if it has a crossing-free embedding in the plane with every vertex on the boundary of the unbounded face. A {\em cut-vertex} of a connected graph $G=(V,E)$ is a vertex $v\in V$ such that $G-v$ is disconnected. A graph is {\em nonseparable} if it is connected and does not have a cut-vertex. A {\em block} is a maximal nonseparable induced subgraph, and $G$ is a block-clique graph if every block is a clique.  

Throughout this paper, given a graph $G=(V,E)$ and a tree cover $\mathcal{T}$ of $G$, we use $T_v\in \mathcal{T}$ to denote the tree containing $v\in V$. 

\subsection{Preliminaries} In this section, we give some preliminary results that will be used throughout the remainder of the paper. 

It is shown in Propostion 3.3 of \cite{BFMN11} that deleting a leaf of a graph does not affect the tree cover number of the graph and that subdividing an edge does not affect the tree cover number of the graph. These two facts will be used repeatedly in the proofs throughout this paper. 

\begin{thm}\cite{vander}
	Suppose $G_i, i=1,\ldots,h$ are graphs, there is a vertex $v$ for all $i\neq j$, $G_i\cap G_j=\{v\}$ and $G=\cup_{i=1}^{h} G_i.$ Then,
	\[\Mp(G)=\left(\sum\limits_{i=1}^{h}\Mp(G_i)\right)-h+1.\]
\end{thm}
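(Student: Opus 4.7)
The plan is to prove both inequalities $\Mp(G)\ge\sum_{i=1}^{h}\Mp(G_i)-h+1$ and $\Mp(G)\le\sum_{i=1}^{h}\Mp(G_i)-h+1$ by working with positive semidefinite factorizations $A=B^{\top}B$, which convert the zero pattern of $A$ into orthogonality relations among the columns of $B$. Write $n_i=|V(G_i)|$ and $n=|V(G)|$; since $v$ is the unique shared vertex, $n=\sum_i n_i-(h-1)$.

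For the lower bound, I would choose $A_i\in\mathcal{S}_+(G_i)$ with $\nullity(A_i)=\Mp(G_i)$ and let $\widetilde{A}_i$ be the extension to an $n\times n$ PSD matrix obtained by padding with zero rows and columns outside $V(G_i)$. The matrix $A:=\sum_{i=1}^{h}\widetilde{A}_i$ is PSD, and because the sets $V(G_i)\setminus\{v\}$ are pairwise disjoint in $V(G)$ one checks directly that $A\in\mathcal{S}_+(G)$. Being a sum of PSD matrices, $Ax=0$ is equivalent to $\widetilde{A}_ix=0$ for every $i$, i.e.\ $x|_{V(G_i)}\in\ker(A_i)$ for every $i$. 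Parametrizing $x$ by $x_v$ together with the disjoint coordinate blocks $x|_{V(G_i)\setminus\{v\}}$, and splitting into two cases according to whether each $\ker(A_i)$ contains a vector with nonzero $v$-coordinate, a direct dimension count yields $\nullity(A)\ge\sum_i\Mp(G_i)-h+1$.

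For the upper bound, I would take $A\in\mathcal{S}_+(G)$ with $\nullity(A)=\Mp(G)$ and factor $A=B^{\top}B$; let $b_u$ denote the column of $B$ indexed by $u$, and set $W_i=\operatorname{span}\{b_u:u\in V(G_i)\setminus\{v\}\}$. The non-edges of $G$ between different pieces $V(G_i)\setminus\{v\}$ force $\langle b_u,b_w\rangle=0$ whenever $u,w$ lie in distinct such pieces, so $W_i\perp W_j$ for $i\ne j$, and hence $\operatorname{rank}(A)\ge\dim\bigl(\sum_i W_i\bigr)=\sum_i\dim W_i$. I then define $A_i=C_i^{\top}C_i$, where $C_i$ has the columns $b_u$ for $u\in V(G_i)\setminus\{v\}$ together with the orthogonal projection $P_{W_i}(b_v)$ in the $v$-position. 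Because $b_u\in W_i$, we have $\langle b_u,P_{W_i}(b_v)\rangle=\langle b_u,b_v\rangle$ for every $u\in V(G_i)\setminus\{v\}$, so $A_i$ inherits the correct sparsity pattern and lies in $\mathcal{S}_+(G_i)$. Every column of $C_i$ lies in $W_i$, so $\operatorname{rank}(A_i)\le\dim W_i$ and $\Mp(G_i)\ge\nullity(A_i)\ge n_i-\dim W_i$. Summing over $i$ and using $n=\sum_i n_i-(h-1)$ gives
\[\sum_{i=1}^{h}\Mp(G_i)-h+1\;\ge\;n-\sum_i\dim W_i\;\ge\;n-\operatorname{rank}(A)\;=\;\Mp(G).\]

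The main subtlety lies in the upper bound. The most natural choice of $A_i$ as the principal submatrix $A[V(G_i)]$ is too naive: $b_v$ may lie in $W_1+\cdots+W_h$ without lying in any single $W_i$, and in that situation the principal-submatrix ranks already sum to strictly more than $\operatorname{rank}(A)$. Replacing $b_v$ by $P_{W_i}(b_v)$ discards precisely the cross-factor components of $b_v$ while preserving every inner product with the other columns of $B$ indexed by $V(G_i)$, which is exactly what is needed to force the ranks to add correctly.
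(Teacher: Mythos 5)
The paper does not prove this statement; it is quoted verbatim as a known result of van der Holst \cite{vander}, so there is no in-paper proof to compare against. Judged on its own, your argument is correct and complete in outline. The lower bound via $A=\sum_i\widetilde{A}_i$ is sound: the zero pattern is right because distinct $G_i$ share only $v$, the kernel of a sum of PSD matrices is the intersection of the kernels, and $\ker(A)$ is the fiber product of the $\ker(A_i)$ over the $v$-coordinate, i.e.\ the kernel of a map from a $\sum_i\Mp(G_i)$-dimensional space to $\R^{h-1}$, which gives the dimension count you claim. The upper bound is the more delicate half and you handle it correctly: the orthogonality $W_i\perp W_j$ follows from the non-edges between $V(G_i)\setminus\{v\}$ and $V(G_j)\setminus\{v\}$, and the replacement of $b_v$ by $P_{W_i}(b_v)$ is exactly the right fix — it preserves all inner products with vectors of $W_i$ (so $A_i=C_i^{\top}C_i\in\mathcal{S}_+(G_i)$, noting $G[V(G_i)]=G_i$ and that the diagonal entry $\lVert P_{W_i}(b_v)\rVert^2$ is allowed to be any nonnegative number under the paper's definition) while forcing $\operatorname{rank}(A_i)\le\dim W_i$, so the ranks add up against $\operatorname{rank}(A)$. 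This is essentially the standard orthogonal-representation (Gram factorization) proof of the additivity of $\operatorname{mr}_+$ at a cut vertex, which is the substance of van der Holst's result; your identification of the ``main subtlety'' is exactly the point where a naive principal-submatrix argument fails.
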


This is known as the {\em cut-vertex reduction formula.} The authors of \cite{EEHHH} give an analogous cut-vertex reduction formula for computing the tree cover number and we use this technique  multiple times throughout this paper.
\begin{prop}\label{cutvertextree}\cite{EEHHH}
	Suppose $G_i, i=1,\ldots,h$, are graphs, there is a vertex $v$ for all $i\neq j$, $G_i\cap G_j=\{v\}$ and $G=\cup_{i=1}^{h} G_i.$ Then \[T(G)=\left(\sum\limits_{i=1}^{h}T(G_i)\right)-h+1.\]
\end{prop}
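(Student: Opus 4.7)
The plan is to prove the formula by establishing matching upper and lower bounds, both obtained by a direct gluing/splitting argument at the shared vertex $v$.

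For the upper bound $T(G)\le \sum_{i=1}^h T(G_i)-h+1$, I would take a minimum tree cover $\mathcal{T}_i$ of each $G_i$ and let $T_v^{(i)}\in \mathcal{T}_i$ be the (unique) tree containing $v$. Form $T^\star$ as the graph with vertex set $\bigcup_i V(T_v^{(i)})$ and edge set $\bigcup_i E(T_v^{(i)})$. Since the $T_v^{(i)}$'s are pairwise vertex-disjoint except at $v$, their union is a tree. The key step is showing $T^\star$ is an \emph{induced} subgraph of $G$: any edge of $G$ joining two vertices of $T^\star$ lies in a unique $G_i$ (because $G_i\cap G_j=\{v\}$ forces each edge into one piece), and the endpoints of that edge must therefore both lie in $V(G_i)\cap V(T^\star)=V(T_v^{(i)})$, where the induced-subgraph property in $G_i$ finishes the check. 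Then $\mathcal{T}^\star=\{T^\star\}\cup\bigcup_i(\mathcal{T}_i\setminus\{T_v^{(i)}\})$ is a tree cover of $G$; the remaining trees are induced in $G$ by the same edge-in-one-$G_i$ argument applied to subgraphs avoiding $v$. Counting gives $|\mathcal{T}^\star|=1+\sum_i(T(G_i)-1)$.

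For the lower bound $T(G)\ge \sum_i T(G_i)-h+1$, I would take a minimum tree cover $\mathcal{T}$ of $G$ and let $T_v\in\mathcal{T}$ be the tree containing $v$. Every other tree $T\in\mathcal{T}$ is contained in some $G_i-v$: since $T$ is connected, avoids $v$, and $V(G_i-v)\cap V(G_j-v)=\emptyset$ for $i\ne j$, the vertices of $T$ cannot straddle two different $G_i-v$'s. For $T_v$ itself, removing $v$ yields a forest whose components each lie in a unique $G_i-v$ by the same argument, so $T_v\cap G_i$ (vertex set $V(T_v)\cap V(G_i)$, edges of $T_v$ inside $G_i$) is a tree in $G_i$, and it is induced in $G_i$ because $T_v$ is induced in $G$. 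Assigning each $T\in\mathcal{T}\setminus\{T_v\}$ to the index $i$ with $T\subseteq G_i-v$, together with $T_v\cap G_i$, yields a tree cover $\mathcal{T}_i$ of $G_i$ for each $i$. Summing, $\sum_i|\mathcal{T}_i|=|\mathcal{T}|+(h-1)$, because $T_v$ contributes once to each of the $h$ covers while every other tree contributes exactly once to a single cover. Hence $|\mathcal{T}|=\sum_i|\mathcal{T}_i|-h+1\ge \sum_i T(G_i)-h+1$.

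The main obstacle I anticipate is the bookkeeping that ensures the constructed trees are induced subgraphs (rather than just connected subgraphs) and pairwise vertex-disjoint. All of these issues trace back to the single structural hypothesis $G_i\cap G_j=\{v\}$, which ensures that edges of $G$ cannot cross between distinct $G_i$'s and that vertices other than $v$ belong to a unique piece — this is precisely what licenses both the gluing in the upper bound and the splitting in the lower bound.
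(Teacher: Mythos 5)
Your proof is correct. Note that the paper does not prove this proposition at all---it is quoted from the cited reference [Ekstrand et al., \emph{Positive semidefinite zero forcing}]---so there is no in-paper argument to compare against; your two-sided argument (gluing the $h$ trees through $v$ into a single induced tree for the upper bound, and splitting a minimum cover of $G$ along the partition $V(G)\setminus\{v\}=\bigcup_i\bigl(V(G_i)\setminus\{v\}\bigr)$ for the lower bound) is the standard cut-vertex reduction and is complete, with the key verifications (inducedness of the glued tree, and the fact that a connected subgraph avoiding $v$ cannot straddle two summands) correctly reduced to the hypothesis $G_i\cap G_j=\{v\}$.
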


In the case that $h$ from Proposition \ref{cutvertextree} is two, we say $G$ is the {\em vertex-sum} of $G_1$ and $G_2$ and write $G=G_1\oplus_v G_2$. 

Bozeman et al. \cite{REU2015} give a bound on the tree cover number of a graph in terms of  the independence number of the graph.
\begin{prop}\label{treeindep}\cite{REU2015}
	Let $G = (V, E)$ be a connected graph, and let $S\subseteq V(G)$ be an independent set. Then, $T(G) \leq|G|-|S|$. In particular, $T(G)\leq |G|-\alpha(G)$, where $\alpha(G)$ is the independence number of G. Furthermore, this bound is tight.
\end{prop}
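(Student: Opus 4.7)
The plan is to build the tree cover explicitly by pairing off each vertex of $S$ with one of its neighbors outside of $S$. Assume $|G|\ge 2$ (the one-vertex case being trivial). Since $G$ is connected, every $s\in S$ has at least one neighbor, and since $S$ is independent that neighbor must lie in $V\setminus S$. For each $s\in S$, fix a choice $\phi(s)\in N(s)\cap(V\setminus S)$.

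For every $v\in V\setminus S$, set $T_v:=G[\,\{v\}\cup\phi^{-1}(v)\,]$. The key observation is that $T_v$ is a star centered at $v$: the center $v$ is adjacent to every vertex of $\phi^{-1}(v)\subseteq S$ by construction, while no two vertices of $\phi^{-1}(v)$ are adjacent because they both lie in the independent set $S$. Hence each $T_v$ is an induced tree (possibly a single vertex if $\phi^{-1}(v)=\emptyset$). The vertex sets $\{v\}\cup\phi^{-1}(v)$ for $v\in V\setminus S$ partition $V$, so $\{T_v\}_{v\in V\setminus S}$ is a valid tree cover of $G$ with exactly $n-|S|$ trees. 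This yields $T(G)\le n-|S|$, and specializing to a maximum independent set gives $T(G)\le n-\alpha(G)$.

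For tightness, it suffices to exhibit a connected graph attaining equality; the cycle $C_4$ works, since $\alpha(C_4)=2$ and a single induced tree can contain at most $3$ vertices of $C_4$ (the only spanning induced subgraph is $C_4$ itself, which is not a tree), so $T(C_4)=2=|C_4|-\alpha(C_4)$.

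There is essentially no obstacle here once one has the right construction: the whole argument hinges on recognizing that mapping each $s\in S$ to a neighbor outside $S$ produces stars as the induced subgraphs, precisely because $S$ is independent. The only subtlety to record is why the induced subgraph $G[\{v\}\cup\phi^{-1}(v)]$ contains no extra edges among the $\phi^{-1}(v)$ side, which is immediate from independence of $S$.
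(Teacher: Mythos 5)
Your construction is correct, and it is worth noting that the paper itself does not prove this proposition at all: it is imported from \cite{REU2015} as a black box, so there is no in-paper argument to compare against. Your star-partition proof is the natural one (and, as far as I can tell, essentially the argument in the cited source): independence of $S$ guarantees both that $\phi(s)$ can be chosen in $V\setminus S$ and that each $G[\{v\}\cup\phi^{-1}(v)]$ is an induced star, hence a tree, and the sets $\{v\}\cup\phi^{-1}(v)$ partition $V$ into exactly $|G|-|S|$ pieces. The tightness witness $C_4$ is fine (so is $K_2$). The only quibble is your dismissal of the one-vertex case as ``trivial'': for $G=K_1$ with $S=V$ the stated bound would read $T(G)\leq 0$, which is false, so the statement implicitly assumes $|G|\geq 2$ (or $S\neq V$); your assumption $|G|\ge 2$ is therefore doing real work and should be acknowledged as a hypothesis rather than waved off. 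This does not affect any use of the proposition in the paper.
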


It is also shown in Proposition 6 of \cite{REU2015} that for a graph $G=(V,E)$ and a bridge $e\in E$, $e$ belongs to some tree in every minimum tree cover. Embedded in the proof of this proposition is the following lemma, and we include the proof of the lemma for completeness.

\begin{lem}\label{treebridge}
	Let $G=(V,E)$ be a connected graph and $e=\{u,v\}$ a bridge in $E$. Let $G_1$ and $G_2$ be the connected components of $G-e$. Then $T(G)=T(G_1)+T(G_2)-1=T(G-e)-1$.
\end{lem}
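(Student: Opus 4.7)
The plan is to prove the two equalities separately. The second one, $T(G_1)+T(G_2)-1 = T(G-e)-1$, reduces to showing $T(G-e)=T(G_1)+T(G_2)$, which is immediate: $G-e$ has connected components $G_1$ and $G_2$, and a tree cover of a disconnected graph is precisely the disjoint union of tree covers of its connected components (extending $T$ additively to disconnected graphs).

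For $T(G)=T(G_1)+T(G_2)-1$ I would prove matching upper and lower bounds. The upper bound $T(G)\le T(G_1)+T(G_2)-1$ is constructive: take minimum tree covers $\mathcal{T}_1$ of $G_1$ and $\mathcal{T}_2$ of $G_2$, let $T_u\in \mathcal{T}_1$ be the tree containing $u$ and $T_v\in \mathcal{T}_2$ be the tree containing $v$, and replace these two trees in $\mathcal{T}_1\cup \mathcal{T}_2$ by the single graph $T_u\cup T_v\cup \{e\}$. This is a tree (joining two vertex-disjoint trees by one edge creates no cycle), and it is an induced subgraph of $G$ precisely because $e$ is the unique edge of $G$ joining $V(G_1)$ to $V(G_2)$. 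The result is a tree cover of $G$ of size $|\mathcal{T}_1|+|\mathcal{T}_2|-1$.

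For the lower bound $T(G_1)+T(G_2)-1\le T(G)$, start from a minimum tree cover $\mathcal{T}$ of $G$ and let $T_u,T_v\in \mathcal{T}$ be the trees containing $u,v$. The key observation is that any $T'\in \mathcal{T}$ not containing both $u$ and $v$ must lie entirely in $G_1$ or entirely in $G_2$; otherwise a path in $T'$ from a vertex in $V(G_1)$ to one in $V(G_2)$ would have to traverse the bridge $e$, forcing $\{u,v\}\subseteq V(T')$. Now split into cases. If $T_u=T_v$, delete $e$ from this tree to obtain two induced subtrees $T^u\subseteq G_1$ and $T^v\subseteq G_2$ (again by the bridge property); together with the remaining trees of $\mathcal{T}$ — each wholly inside $G_1$ or $G_2$ — we obtain tree covers of $G_1$ and $G_2$ of combined size $|\mathcal{T}|+1$. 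If $T_u\neq T_v$, the observation already forces $T_u\subseteq G_1$ and $T_v\subseteq G_2$, and the trees of $\mathcal{T}$ partition into a tree cover of $G_1$ and one of $G_2$ of total size $|\mathcal{T}|$. In either case $T(G_1)+T(G_2)\le T(G)+1$, yielding the claim.

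The main subtlety, and the only place where care is truly needed, is verifying that the trees produced by merging across $e$ or splitting at $e$ remain \emph{induced} subgraphs; everything hinges on the fact that $e$ is the only edge of $G$ with one endpoint in $V(G_1)$ and the other in $V(G_2)$.
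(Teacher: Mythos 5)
Your proof is correct and takes essentially the approach the paper intends (the proof it promises ``for completeness'' is the one embedded in Proposition~6 of \cite{REU2015}): merge the trees containing $u$ and $v$ across the bridge for the upper bound, and sort or split the trees of a minimum cover of $G$ into covers of $G_1$ and $G_2$ for the lower bound. Your check that the merged and split pieces remain \emph{induced} subtrees, which rests on $e$ being the unique edge between $V(G_1)$ and $V(G_2)$, is exactly the point that needs care, and you handle it correctly.
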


For a graph $G=(V,E)$ and an edge $e\in E$, $\Mp(G)-1\leq\Mp(G-e)\leq \Mp(G)+1$ \cite{EEHHH} and $T(G)-1\leq T(G-e)\leq T(G)+1$ \cite{REU2015}. It is also known that for $v\in V$, $\Mp(G)-1\leq\Mp(G-v)\leq \Mp(G)+\deg(v)-1$ (see Fact 11 of page 46-11 of \cite{handbook}). We show that an analogous bound holds for $T(G).$
\begin{prop}
	For a graph $G=(V,E)$ and vertex $v\in V$, \[T(G)-1\leq T(G-v)\leq T(G)+\deg(v)-1.\]
\end{prop}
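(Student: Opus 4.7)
The plan is to prove the two inequalities separately using the natural operations on tree covers.

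For the lower bound $T(G) - 1 \leq T(G - v)$, I would start with a minimum tree cover $\mathcal{T}'$ of $G - v$ and form a tree cover of $G$ by adjoining the singleton tree on $\{v\}$. The collection $\mathcal{T}' \cup \{\{v\}\}$ consists of vertex-disjoint induced subtrees of $G$ covering all of $V(G)$, so $T(G) \leq |\mathcal{T}'| + 1 = T(G - v) + 1$, which rearranges to the claim.

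For the upper bound $T(G - v) \leq T(G) + \deg(v) - 1$, I would take a minimum tree cover $\mathcal{T}$ of $G$ and let $T_v \in \mathcal{T}$ be the tree containing $v$. The plan is to replace $T_v$ in the cover by the connected components of $T_v - v$. Concretely, I would form the new collection
\[
\mathcal{T}^* = (\mathcal{T} \setminus \{T_v\}) \cup \{\text{components of } T_v - v\}.
\]
Since $T_v$ is a tree, the number of components of $T_v - v$ equals $\deg_{T_v}(v)$ when $|V(T_v)| \geq 2$ and is $0$ when $T_v = \{v\}$; in either case it is at most $\deg_G(v)$. Thus $|\mathcal{T}^*| \leq |\mathcal{T}| - 1 + \deg(v) = T(G) + \deg(v) - 1$.

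The one point that needs verification, and which I view as the only real obstacle, is that $\mathcal{T}^*$ really is a tree cover of $G - v$, i.e., that each component of $T_v - v$ is an \emph{induced} subtree of $G - v$. This follows from the fact that $T_v$ is an induced subgraph of $G$: if $C \subseteq V(T_v) \setminus \{v\}$ is the vertex set of a component of $T_v - v$, then the edges of $G - v$ with both endpoints in $C$ are precisely the edges of $G$ with both endpoints in $C$, which equal the edges of $T_v$ with both endpoints in $C$, which equal the edges of the component. Vertex-disjointness of the members of $\mathcal{T}^*$ and the fact that they cover $V(G) \setminus \{v\}$ are immediate from the corresponding properties of $\mathcal{T}$. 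This establishes $T(G - v) \leq T(G) + \deg(v) - 1$ and completes the proof.
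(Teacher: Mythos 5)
Your proof is correct, and your lower-bound argument (adjoining the singleton tree $\{v\}$ to a minimum tree cover of $G-v$) is exactly the paper's. For the upper bound, however, you take a genuinely different route. The paper deletes the set $E_v$ of edges incident to $v$, observes that $T(G-E_v)=T(G-v)+1$ because $v$ becomes an isolated vertex, and then invokes the cited fact that deleting a single edge raises the tree cover number by at most $1$, applied $\deg(v)$ times. You instead work directly with a minimum tree cover $\mathcal{T}$ of $G$: you split the tree $T_v$ containing $v$ into the components of $T_v-v$, check that each such component is an induced subtree of $G-v$ (which is right, since a component of $T_v - v$ is induced in $T_v - v$, which is induced in $T_v$, which is induced in $G$), and count that at most $\deg_{T_v}(v)\leq\deg_G(v)$ new trees appear. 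Your argument is self-contained, whereas the paper's leans on the external edge-deletion bound from the literature; yours also yields the slightly sharper intermediate statement $T(G-v)\leq T(G)+\deg_{T_v}(v)-1$ for the tree $T_v$ in any minimum tree cover, which can beat $\deg_G(v)$ when $v$ has many neighbors outside its own tree. The trade-off is that the paper's version is shorter because the splitting work is already packaged inside the quoted edge-deletion lemma.
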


\begin{proof}
	Since any tree cover of $G-v$ together with the tree consisting of the single vertex $v$ is a tree cover for $G$, then  $T(G)\leq T(G-v)+1$, which gives the lower bound.  To see the upper bound, let $E_v$ denote the set of edges incident to $v$, and let $G-E_v$ denote the graph resulting from deleting the edges in $E_v$.  Note that $|E_v|=\deg(v)$, and that $T(G-E_v)=T(G-v)+1.$ Since the deletion of an edge can raise the tree cover number by at most 1, then $T(G-v)+1=T(G-E_v)\leq T(G)+\deg(v),$ and the upper bound follows. 
\end{proof}

\section{Graphs with $T(G)\leq \Mp(G)$}\label{sectionconjecturetrue} In this section, we prove that $T(G)\leq \Mp(G)$ for certain line graphs, for $G^{\triangle}$ (defined below) where $G$ is any graph, for graphs whose complements have sufficiently small tree-width, and for graphs with a sufficiently large number of edges.

We first show that for any connected graph $G$ on $n\geq 2$ vertices, $T(G)\leq \left\lceil\frac{n}{2}\right\rceil.$ 

\begin{lem}\label{deletestar}
	Let $G$ be a connected graph on $n\geq 3$ vertices. Then there exists an induced subgraph $H$ of $G$ such that $H=K_{1,p}$ for some $p\geq 1$ and $G-H$ is connected. {\rm (See Figure \ref{ffff}).} 
\end{lem}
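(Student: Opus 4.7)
My plan is to build $H$ from a depth-first search (DFS) tree of $G$ using a deepest vertex. First I would fix a DFS tree $T$ of $G$ rooted at an arbitrary vertex $r$, and let $v$ be a vertex of maximum depth in $T$; being deepest, $v$ is a leaf of $T$. Let $u$ be the parent of $v$ in $T$, and let $C$ denote the set of children of $u$ in $T$. Because $v$ is at maximum depth, every child of $u$ also lies at that maximum depth and therefore has no further descendants, so every vertex of $C$ is a leaf of $T$.

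Next I would invoke the standard DFS property that every non-tree edge of $G$ is a back edge between a descendant and one of its proper ancestors. This forces two siblings of $T$ to be non-adjacent in $G$; in particular, $C$ is an independent set of $G$. Combined with the tree edges from $u$ to each vertex of $C$, this shows that $H := G[\{u\} \cup C]$ is an induced star isomorphic to $K_{1,|C|}$ with $|C| \geq 1$ (since $v \in C$).

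To see that $G - H$ is connected, observe that removing $\{u\} \cup C$ from $T$ deletes precisely the subtree rooted at $u$, leaving a connected subtree $T'$ of $T$. If $u \neq r$, then $T'$ is non-empty (it contains $r$) and is a spanning connected subgraph of $G - H$, so $G - H$ is connected. The main obstacle is the degenerate case $u = r$: here $T$ is itself the star $K_{1, n-1}$, and since siblings are non-adjacent in $G$ and the only possible ancestor of any non-root vertex is $r$ (whose edge to each child is already a tree edge), no back edges can exist, forcing $G = T = K_{1, n-1}$. In this case I would instead pick any $v_0 \in C$ and set $H' := G[\{r\} \cup (C \setminus \{v_0\})]$; since $|C| = n - 1 \geq 2$ by the hypothesis $n \geq 3$, we get $H' \cong K_{1, n-2}$ with $n - 2 \geq 1$, and $G - H' = \{v_0\}$ is a single vertex, hence connected.
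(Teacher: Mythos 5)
Your proof is correct, and it takes a genuinely different route from the paper's. The paper argues by induction on $n$: it deletes a non-cut vertex $v$ (which exists because a spanning tree has at least two leaves), obtains a deletable induced star in $G-v$ from the induction hypothesis, and then performs a case analysis on how $v$ attaches to that star. You instead read the star directly off a depth-first search tree $T$: a deepest vertex $v$, its parent $u$, and the set $C$ of children of $u$ together form exactly the subtree of $T$ rooted at $u$ (every vertex of $C$ sits at maximum depth and hence is a leaf of $T$); the no-cross-edge property of undirected DFS makes $C$ independent, so $G[\{u\}\cup C]$ is an induced star; and deleting an entire rooted subtree from a spanning tree leaves a connected spanning subgraph of $G-H$. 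Your treatment of the degenerate case $u=r$ is also sound: there the maximum depth is $1$, the same DFS property forces $G$ to be the star $K_{1,n-1}$, and peeling off the center together with all but one leaf (possible since $n\geq 3$) leaves a single vertex. What your approach buys is a non-inductive, essentially case-free, and immediately algorithmic construction; what it costs is reliance on the structural lemma that undirected DFS trees admit only back edges, whereas the paper's induction uses nothing beyond the elementary existence of a non-cut vertex. Both arguments establish the statement in full.
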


\begin{proof}
	We prove the lemma by induction. For $n=3$ the claim holds. Let $G$ be a graph on $n\geq 4$ vertices and suppose the lemma holds for all graphs on  $3\leq k \leq n-1$ vertices. It is known that every connected graph has at most $n-2$ cut vertices (since a spanning tree of the graph has at least two leaves and the removal of these leaves will not disconnect the graph). Let $v$ be a vertex in $V(G)$ that is not a cut vertex. By hypothesis, there exists an induced subgraph $H'=K_{1,p}$  for some $p\geq 1$ in $G-v$ whose deletion does not disconnect $G-v$. First we consider the case with $p=1$, and then we consider the case with $p\geq 2$. 
	
	Case 1: Suppose $p=1$ (i.e., $H'=K_2$), and let $a,b$ be the vertices of $H'$. If $v$ has a neighbor in $G[V(G)\setminus\{a,b\}]$, then $G[V(G)\setminus\{a,b\}]$ is connected, and the claim holds with $H=H'$. Otherwise $v$ has a neighbor in $\{a,b\}$. Assume first that $v$ is adjacent to exactly one of $a$ and $b$. Without loss of generality, suppose $v$ is adjacent to $a$ and not adjacent to $b$. Then, $H=G[a,b,v]=K_{1,2}$ and $G-H$ is connected. Now suppose that $v$ is adjacent to both $a$ and $b$. Since $G-v$ is connected, then either $a$ or $b$ has a neighbor in $G[V(G)\setminus\{v,a,b\}]$. Without loss of generality, let $a$ have a neighbor in $G[V(G)\setminus\{v,a,b\}]$. Then $H=G[\{v,b\}]=K_{1,1}$ and $G-H$ is connected.  
	
	Case 2: Suppose $p\geq 2$. If $v$ has a neighbor in $G[V(G)\setminus V(H')]$, then set $H=H'$ and the claim holds. Otherwise $v$ has neighbors only in $V(H')$. Recall that $H'$ is a star. First suppose that $v$ is adjacent to a leaf $w\in V(H')$. If $w$ is not a cut vertex of $G-v$, then for $H=G[\{v,w\}]=K_{1,1}$, $G-H$ is connected. If $w$ is a cut vertex of $G-v$, then $w$ has a neighbor in $G[V(G)\setminus\{V(H')\cup v\}]$. Then  $H=G[V(H') \setminus\{w\}]=K_{1,q}$ for some $q\geq 1$ and $G-H$ is connected. Next suppose that $v$ is not adacent to a leaf in $H'$. Then it must be adjacent to the center vertex. Then $H=G[V(H')\cup \{v\}]$ is a  star, and $G-H$ is connected.  This completes the proof.
\end{proof}

\begin{figure}[h!]
	\begin{center}
		\begin{tikzpicture}[scale=1.0]
		
		\Bvertex (1) at (0,0){};
		\Bvertex (2) at (0,-1){};
		\vertex (3) at  (0,-2) {};
		\vertex (4) at (0,-3) {};
		\vertex (5) at (0,-4) {};
		\vertex (6) at (1,0){};
		\vertex (7) at (1,-1){};
		\vertex (8) at  (1,-2) {};
		\vertex (9) at (1,-3) {};
		\vertex (10) at (1,-4) {};
		
		\Edge(1)(2)
		\Edge(2)(3)
		\Edge(3)(4)
		\Edge(4)(5)
		\Edge(6)(7)
		\Edge(7)(8)
		\Edge(8)(9)
		\Edge(9)(10)
		\Edge(2)(7)
		\Edge(3)(8)
		\Edge(4)(9)
		
		\Bvertex (11) at (5,-2){};
		\vertex (12) at (4,-2){};
		\Bvertex (13) at (6,-2){};
		\Bvertex (14) at (5,-1){};
		\Bvertex (15) at (5,-3){};
		\Bvertex(16) at (4,-1){};
		\Bvertex(17) at (6,-1){};
		\Bvertex (18) at (4,-3){};
		\Bvertex (19) at (6,-3){};
		
		\Edge(11)(12)
		\Edge(11)(13)
		\Edge(11)(14)
		\Edge(11)(15)
		\Edge(11)(16)
		\Edge(11)(17)
		\Edge(11)(18)
		\Edge(11)(19)

		\end{tikzpicture}
		\caption{Two examples of Lemma \ref{deletestar}, where induced subgraphs $H$ are black.}
		\label{ffff}
	\end{center}
\end{figure}
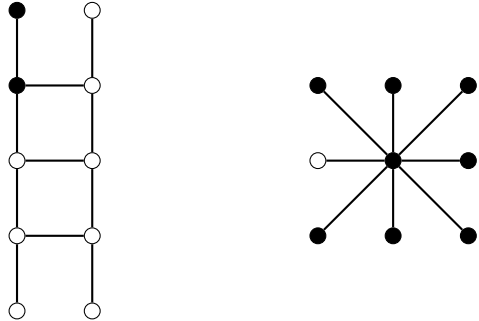
\begin{thm}\label{treecovernover2}
	For any simple connected graph $G=(V,E)$ on $n\geq 2$ vertices, $T(G)\leq \left \lceil \frac{n}{2} \right \rceil.$ 
\end{thm}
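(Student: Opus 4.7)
The plan is to prove the bound by induction on $n$, using Lemma \ref{deletestar} as the engine that peels off an induced star at each step while keeping the remaining graph connected (so the induction hypothesis can be reapplied). Since the lemma already does the structural work, the theorem follows from a short counting argument.

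I would start with the base cases $n = 2$ and $n = 3$: for $n = 2$, $G = K_2$ is a tree, so $T(G) = 1 = \lceil 2/2 \rceil$; for $n = 3$, $G$ is either $P_3$ or $K_3$, each of which is covered by a single tree (itself, as it is a tree or a clique-on-3 which contains a spanning path induced subgraph on $V(G)$), hence $T(G) = 1 \le 2 = \lceil 3/2 \rceil$. For the inductive step, assume the bound holds for every connected graph on between $2$ and $n-1$ vertices and let $G$ be connected on $n \ge 4$ vertices. Apply Lemma \ref{deletestar} to obtain an induced subgraph $H = K_{1,p}$ with $p \ge 1$ such that $G - H$ is connected.

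Next, I would form a tree cover of $G$ by taking any minimum tree cover of $G - H$ together with $H$ itself (which is a star, and hence a tree, and it is induced in $G$ by construction). This immediately gives
\[
T(G) \le T(G - H) + 1.
\]
The one bookkeeping point is that $G - H$ has $n - p - 1$ vertices, so this could be $0$ or $1$; if $G - H$ is empty then $G = H$ is a tree and $T(G) = 1 \le \lceil n/2 \rceil$, while if $G - H$ is a single vertex then $T(G - H) = 1$ and $T(G) \le 2 \le \lceil n/2 \rceil$ since $n \ge 4$. Otherwise $G - H$ is connected on at least $2$ vertices and the inductive hypothesis applies.

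Using the induction hypothesis on $G - H$ yields
\[
T(G) \le T(G - H) + 1 \le \left\lceil \tfrac{n - p - 1}{2} \right\rceil + 1.
\]
Since $p \ge 1$, we have $n - p - 1 \le n - 2$, and therefore
\[
\left\lceil \tfrac{n - p - 1}{2} \right\rceil + 1 \le \left\lceil \tfrac{n - 2}{2} \right\rceil + 1 = \left\lceil \tfrac{n}{2} \right\rceil,
\]
completing the induction. There is no real obstacle beyond Lemma \ref{deletestar}; the only delicate point is the tiny-case bookkeeping when $G - H$ has at most one vertex, which I would handle explicitly so that the induction hypothesis is only invoked on connected graphs with at least two vertices.
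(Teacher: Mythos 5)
Your proof follows essentially the same route as the paper: induct on $n$, peel off the induced star $H=K_{1,p}$ guaranteed by Lemma \ref{deletestar}, and combine $T(G)\le T(G-H)+1$ with $p\ge 1$ to close the induction; your extra bookkeeping for the cases where $G-H$ has at most one vertex is a reasonable addition that the paper elides. One small correction: in the base case $n=3$ you assert $T(K_3)=1$ via a ``spanning path,'' but the trees in a tree cover must be \emph{induced} subgraphs, so in fact $T(K_3)=2$; the required bound $T(K_3)\le\left\lceil 3/2\right\rceil=2$ still holds, so nothing downstream is affected.
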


\begin{proof}
	The theorem holds for $n=2$. Let $G$ be a graph on $n\geq 3$ such that the claim holds on all graphs with fewer than $n$ vertices. By Lemma \ref{deletestar}, there exists an induced tree $H=K_{1,p}$, for some $p\geq 1$, of $G$ such that $G'=G-H$ is connected. By the induction hypothesis, $T(G')\leq \left \lceil\frac{|G'|}{2} \right\rceil\leq \left \lceil \frac{n-2}{2} \right \rceil$. Then $T(G)\leq T(G')+1\leq \left \lceil \frac{n}{2}\right \rceil.$
\end{proof}
It is shown in \cite{Louis} that for a triangle-free graph $G$, $\Mp(G)\leq \frac{n}{2}$. The next corollary is a result of Theorem \ref{treecovernover2} and the fact that $\Mp(G)=\TC(G)$ \cite{BFMN11} for outerplanar graphs.

\begin{cor}
	If $G$ is a connected outerplanar graph on $n$ vertices, then $\Mp(G)\leq \left\lceil\frac{n}{2}\right\rceil$. 
\end{cor}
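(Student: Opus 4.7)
The plan is to combine the two ingredients that immediately precede the corollary: the upper bound on the tree cover number from Theorem \ref{treecovernover2} and the equality $\Mp(G)=T(G)$ for outerplanar graphs established in \cite{BFMN11}. Specifically, given a connected outerplanar graph $G$ on $n$ vertices, I would first observe that $G$ is a connected simple graph, so when $n\geq 2$, Theorem \ref{treecovernover2} yields $T(G)\leq \left\lceil \frac{n}{2}\right\rceil$. Then, since outerplanarity is the hypothesis under which the Barioli et al.\ equality applies, $\Mp(G)=T(G)$, and stringing the two together gives $\Mp(G)=T(G)\leq \left\lceil \frac{n}{2}\right\rceil$.

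The only edge case to dispatch is $n=1$, which falls outside the range of Theorem \ref{treecovernover2}; here $\Mp(G)\leq 1 = \left\lceil \frac{1}{2}\right\rceil$ is trivial, so the statement holds without further work. There is essentially no obstacle, since both pieces have just been proved or cited; the corollary is a one-line deduction. The only thing worth a moment's thought is whether the outerplanarity hypothesis is actually used—it is exactly what is needed to invoke $\Mp=T$, while the bound on $T$ itself needs only connectedness. If one wanted to avoid citing \cite{BFMN11}, one could try to bound $\Mp(G)$ directly, but that seems unnecessary given the clean cited equality.
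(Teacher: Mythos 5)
Your proposal is correct and is exactly the paper's argument: the corollary is obtained by combining Theorem \ref{treecovernover2} with the equality $\Mp(G)=T(G)$ for outerplanar graphs from \cite{BFMN11}. Your extra remark dispatching the $n=1$ case is a harmless refinement the paper leaves implicit.
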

Some examples of graphs with  $T(G)=\left\lceil\frac{n}{2}\right\rceil$ are the complete graphs $K_n$ and the well known Friendship graphs (graphs on $n=2k+1$ vertices, $k\geq 1$,  consisting of exactly $k$ triangles all joined at a single vertex.) Connected outerplanar graphs having $T(G)=\left\lceil\frac{n}{2}\right\rceil$ are characterized in Section \ref{sectionouterplanar}. 

For a graph $G=(V,E)$, the {\em line graph} of $G$, denoted $L(G)$, is the graph whose vertex set is the edge set of $G$, and two vertices are adjacent in $L(G)$ if and only if the corresponding edges share an endpoint in $G$. The positive semidefinite maximum nullity of line graphs is studied in \cite{linegraphs}. In particular, they state that if $G$ is a connected graph on $n$ vertices and $m$ edges, then $\Mp(L(G))\geq m-n+2.$ Using this together with the fact that $T(L(G))\geq \left\lceil\frac{m}{2}\right\rceil,$ we get the following theorem. 

\begin{thm}
    If $G$ is a connected graph on $n$ vertices and $m\geq 2n-3$ edges, then $T(L(G))\leq\Mp(L(G)).$
\end{thm}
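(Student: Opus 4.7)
The plan is to chain together the two inequalities already in hand and verify that the hypothesis $m \geq 2n-3$ is exactly what makes them compose. Since $G$ is connected with $m \geq 2n-3 \geq 1$ edges (for $n \geq 2$; the small cases are trivial), the line graph $L(G)$ is itself connected and has $m$ vertices. Therefore Theorem \ref{treecovernover2} applies to $L(G)$ and yields
\[
T(L(G)) \;\leq\; \left\lceil \tfrac{m}{2} \right\rceil.
\]
On the other hand, the cited result of \cite{linegraphs} quoted just before the theorem statement gives
\[
\Mp(L(G)) \;\geq\; m - n + 2.
\]

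The only remaining step is an elementary arithmetic check: under $m \geq 2n-3$, one has $\left\lceil m/2 \right\rceil \leq m - n + 2$. Using the bound $\lceil m/2\rceil \leq (m+1)/2$, the inequality $(m+1)/2 \leq m-n+2$ rearranges to $m \geq 2n-3$, so the hypothesis is in fact tight for the odd case (the even case works already from $m \geq 2n-4$). Combining the three lines delivers $T(L(G)) \leq \Mp(L(G))$.

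There is essentially no obstacle here; the only thing to be slightly careful about is the ceiling in the upper bound on $T(L(G))$, since a naive dropping of the ceiling would give the weaker hypothesis $m \geq 2n-4$. Tracking the parity shows why $2n-3$ is the correct threshold for the argument to go through uniformly in $m$.
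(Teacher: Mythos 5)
Your proposal is correct and follows exactly the argument the paper intends: apply Theorem \ref{treecovernover2} to the connected graph $L(G)$ on $m$ vertices to get $T(L(G))\leq\lceil m/2\rceil$, combine with $\Mp(L(G))\geq m-n+2$ from \cite{linegraphs}, and check that $m\geq 2n-3$ makes the two bounds compose (note the paper's sentence preceding the theorem has a typo, writing $T(L(G))\geq\lceil m/2\rceil$ where $\leq$ is meant). Your explicit parity check of the ceiling is a welcome addition, since the paper omits the arithmetic entirely.
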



The next theorem shows that the conjecture  $T(G)\leq \Mp(G)$ holds true for any graph with a large number of edges. 
\begin{thm}
	Let $G$ be a graph on $n$ vertices and $m\leq \frac{3n}{2}-4$ edges. Then $T(\overline{G})\leq \Mp(\overline{G})$.  
\end{thm}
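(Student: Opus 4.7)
The plan is to induct on $n$, using the sparsity of $G$ to find a vertex $v$ of low degree in $G$ (equivalently high degree in $\overline{G}$), and to reduce to a smaller graph.

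First I would observe that $\overline{G}$ is connected except in a degenerate case. If $\overline{G}$ is disconnected, a nontrivial split $V(\overline{G})=A\sqcup B$ with $|A|,|B|\geq 2$ would force $G$ to contain the complete bipartite graph $K_{|A|,|B|}$ and hence $m\geq |A|\cdot|B|\geq 2(n-2)>\frac{3n}{2}-4$. Therefore the only way $\overline{G}$ can be disconnected is when $G$ has a universal vertex $v$, in which case $G-v$ has $m-(n-1)\leq \frac{n}{2}-3$ edges on $n-1$ vertices, comfortably satisfying the hypothesis. Since $\Mp(K_1)=1$ and $\overline{G}$ is the disjoint union of $\overline{G-v}$ and $\{v\}$, both $T(\overline{G})$ and $\Mp(\overline{G})$ exceed their counterparts on $\overline{G-v}$ by exactly one, so the inductive hypothesis closes this case.

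Assume now $\overline{G}$ is connected, so that Theorem~\ref{treecovernover2} gives $T(\overline{G})\leq \lceil n/2\rceil$. The degree-sum bound $2m\leq 3n-8$ guarantees a vertex $v\in V(G)$ with $\deg_G(v)\leq 2$, equivalently $\deg_{\overline{G}}(v)\geq n-3$. I split into three cases by $\deg_G(v)\in\{0,1,2\}$. In each case $\overline{G}$ is close to the $K_1$-join $K_1+\overline{G-v}$: it is exactly this join when $\deg_G(v)=0$, and differs from it by one or two edges incident to $v$ (the edges from $v$ to its two non-neighbors in $\overline{G}$) when $\deg_G(v)\in\{1,2\}$. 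For $\deg_G(v)=0$ I would use the inequalities $\Mp(K_1+H)\geq \Mp(H)+1$ (by extending any orthogonal representation of $H$ with a generic non-orthogonal vector) and $T(K_1+H)\leq T(H)+1$ (by adjoining $v$ as a singleton tree), reducing to induction on $G-v$. For $\deg_G(v)\in\{1,2\}$ I would track the effect on $T$ and $\Mp$ of the one or two edge deletions separating $\overline{G}$ from the join; since each edge deletion shifts $T$ and $\Mp$ by at most one, combining these with induction on $G-v$ (whose edge count $m-\deg_G(v)$ satisfies the hypothesis when $\deg_G(v)=2$) should yield the inequality.

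The main obstacles are twofold. First, the inductive hypothesis on $G-v$ may fail when $\deg_G(v)\leq 1$, since the edge count $m$ can exceed $\frac{3(n-1)}{2}-4$; this will require either a slightly strengthened inductive statement with a small slack in the edge bound, or a more careful choice of reduction vertex (for instance, favoring a vertex of $G$ with degree exactly two whenever one exists, and treating the all-degree-zero-or-one subcases directly). Second, the generic lower bound $\Mp(H)\geq \Mp(H-v)-\deg_H(v)+1$ is far too weak when $v$ is near-universal in $\overline{G}$, since $\deg_{\overline{G}}(v)$ is of order $n$; the key to overcoming this is to forgo raw vertex deletion and instead pass through the join $K_1+\overline{G-v}$, tracking only one or two edge deletions (all incident to the same near-universal vertex) so that the controlled change in $T$ and $\Mp$ is of order $O(1)$ rather than $O(n)$.
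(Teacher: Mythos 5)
Your first reduction is the same as the paper's: when $\overline{G}$ is connected, Theorem \ref{treecovernover2} gives $T(\overline{G})\leq\lceil n/2\rceil$, so everything hinges on proving $\Mp(\overline{G})\geq\lceil n/2\rceil$. You also correctly diagnose that the generic bound $\Mp(H)\geq\Mp(H-v)-\deg_H(v)+1$ is useless for a near-universal vertex. But the substitute you propose --- passing through the join $K_1\vee\overline{G-v}$ and charging only the $d=\deg_G(v)$ missing edges --- still does not close the induction. You get $\Mp(\overline{G})\geq\Mp(\overline{G-v})+1-d$, while on the other side the best available is $T(\overline{G})\leq T(\overline{G-v})+1$; combined with the inductive hypothesis $T(\overline{G-v})\leq\Mp(\overline{G-v})$ this yields only $T(\overline{G})\leq\Mp(\overline{G})+d$, which proves nothing unless $d=0$. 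This deficit of one per step whenever $d\geq 1$ is not a bookkeeping issue that "a small slack in the edge bound'' can absorb: already for $G=P_n$ your scheme gives $\Mp(\overline{P_n})\geq\Mp(\overline{P_{n-1}})+1-1=\Mp(\overline{P_{n-1}})$, i.e.\ only a constant lower bound, whereas in fact $\Mp(\overline{P_n})\geq n-3$. No inductive statement propagated solely by one-vertex/one-edge perturbation inequalities can produce a lower bound on $\Mp(\overline{G})$ that grows linearly in $n$, which is what the theorem requires.

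The missing ingredient is a non-generic external input. The paper takes a spanning tree $T$ of $G$ and invokes the known result that $\mrp(\overline{T})\leq 3$, equivalently $\Mp(\overline{T})\geq n-3$ (\cite[Theorem 3.16 and Corollary 3.17]{AIM08}); since $\overline{G}$ is obtained from $\overline{T}$ by deleting at most $m-(n-1)$ edges and each deletion lowers $\Mp$ by at most one, this gives $\Mp(\overline{G})\geq(n-3)-(m-n+1)=2n-m-4\geq n/2$ under the hypothesis $m\leq\frac{3n}{2}-4$, and the conclusion follows from $T(\overline{G})\leq\lceil n/2\rceil$. If you want to keep an inductive flavor, the statement you would need to carry along is precisely $\Mp(\overline{G})\geq 2n-m-4$, and its base case is the tree-complement bound --- a substantive theorem you cannot avoid importing, and the one piece your plan never supplies.
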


\begin{proof}
	Let $T$ be a spanning tree of $G$. Then $\Mp(\overline{T})\geq n-3$ (see \cite[Theorem 3.16 and Corollary 3.17]{AIM08}). Note that $G$ can be obtained from $T$ by adding at most $m-(n-1)$ edges, so  $\overline{G}$ can be obtained from $\overline{T}$ by deleting at most $m-(n-1)$ edges. Since edge deletion decreases the positive semidefinite maximum nullity by at most 1, then \[\Mp(\overline{G})\geq \Mp(\overline{T})-(m-(n-1))\geq (n-3)-(m-(n-1))\geq \frac{n}{2}, \] where the last inequality follows from the fact that $m\leq \frac{3n}{2}-4$. Since $\Mp(G)$ is an integer, by Theorem \ref{treecovernover2}, we have that $\Mp(\overline{G})\geq T(\overline{G})$. 
\end{proof}

\begin{defn}{\rm
		For a graph $G=(V,E)$, let $G^{\triangle}$ be the graph constructed from $G$ such that for each edge $e=\{u,v\}\in E$, add a new vertex $w_e$ where $w_e$ is adjacent to exactly $u$ and $v$. The vertices $w_e$ are called {\em edge-vertices} of $G^{\triangle}$.}
\end{defn}

\begin{figure}[h!]
	\begin{center}
			\begin{tikzpicture}[scale=1]
			\vertex (v0) at (1.5144, 3.0) {};
			\vertex (v1) at (-1,0.5784) {};
			\vertex (v2) at (4.0,0.6667){};
			\vertex (v3) at (1.4946,1.7255){};
			\Bvertex (v4) at (0.1791, 2.1373) {};
			\Bvertex (v5) at (2.8884,2.1078){};
			\Bvertex (v6) at (1.4946,0.0){};
			\Bvertex (v7) at (1.0939,0.9412){};
			\Bvertex (v8) at (1.8195, 0.9608) {};
			\Bvertex (v9) at (1.7762, 2.3039) {};
			
			\draw[] (v0)to (v1);
			\draw [] (v0) to (v2);
			\draw [] (v0) to (v3);
			\draw [] (v0) to (v4);
			\draw [] (v0) to (v5);
			\draw [] (v0) to (v9);
			\draw [] (v1) to (v2);
			\draw [] (v1) to (v3);
			\draw [] (v1)to (v4);
			\draw [] (v1) to (v6);
			\draw [] (v1) to (v7);
			\draw[] (v2)to (v3);
			\draw [] (v2) to (v5);
			\draw [] (v2)to (v6);
			\draw [] (v2)to (v8);
			\draw [] (v3)to (v7);
			\draw [] (v3)to (v8);
			\draw [] (v3) to (v9);
			\end{tikzpicture}
			\caption{$K_4^{\triangle}$, where the edge-vertices are black. }
		\end{center}
		\end{figure}

\begin{thm}
	For a connected graph $G=(V,E)$ on $n$ vertices and $m$ edges, $T(G^{\triangle})\leq \Mp(G^{\triangle}).$
\end{thm}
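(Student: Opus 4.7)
The plan is to sandwich both $T(G^{\triangle})$ and $\Mp(G^{\triangle})$ around the common value $n$, establishing $T(G^{\triangle}) \leq n \leq \Mp(G^{\triangle})$. The two inequalities come from independent, explicit constructions, and the main conceptual observation is that the natural tree cover and the natural Gram construction both land on $n$.

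For the upper bound on $T(G^{\triangle})$, I would assign each edge $e = \{u,v\}$ of $G$ arbitrarily to one of its two endpoints, and for each $v \in V(G)$ define
\[
T_v := \{v\} \cup \{w_e : e \text{ is assigned to } v\}.
\]
Since any two edge-vertices are non-adjacent in $G^{\triangle}$ (each $w_e$ is only adjacent to the two original endpoints of $e$), the induced subgraph $G^{\triangle}[T_v]$ is a star centered at $v$, hence a tree. The collection $\{T_v : v \in V(G)\}$ is vertex-disjoint and covers all $n + m$ vertices of $G^{\triangle}$, so it is a tree cover of cardinality $n$, giving $T(G^{\triangle}) \leq n$.

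For the lower bound on $\Mp(G^{\triangle})$, I would exhibit a matrix $A \in \mathcal{S}_+(G^{\triangle})$ of rank at most $m$, which forces $\Mp(G^{\triangle}) \geq (n+m)-m = n$. Use one coordinate per edge: let $\{\mathbf{u}_e : e \in E(G)\}$ be the standard basis of $\R^m$, and set
\[
x_v := \sum_{e \ni v} \mathbf{u}_e \quad (v \in V(G)), \qquad x_{w_e} := \mathbf{u}_e \quad (e \in E(G)).
\]
Let $A$ be the Gram matrix of these $n+m$ vectors; then $A$ is automatically PSD of rank at most $m$. The required sparsity is verified by short inner-product computations: $\langle x_u, x_v\rangle = |E_u \cap E_v|$ equals $1$ if $uv \in E(G)$ and $0$ otherwise (since $G$ is simple), $\langle x_u, x_{w_e}\rangle$ equals $1$ exactly when $u \in e$, and $\langle x_{w_e}, x_{w_{e'}}\rangle = \delta_{e,e'}$ correctly reflects the non-adjacency of distinct edge-vertices. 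All diagonal entries are non-negative (in fact $\deg_G(v) \geq 1$ on original vertices and $1$ on edge-vertices), so $A \in \mathcal{S}_+(G^{\triangle})$.

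Combining the two bounds gives $T(G^{\triangle}) \leq n \leq \Mp(G^{\triangle})$. Neither step uses any of the heavier machinery available (cut-vertex reduction, induction, or case analysis on the structure of $G$); the argument is uniform across all connected $G$ with $n \geq 2$, and the $n = 1$ case is trivial. The only real insight required is spotting that the "assign each edge to one endpoint" star decomposition and the "one coordinate per edge" Gram construction both hit the same value $n$; after that, everything is mechanical.
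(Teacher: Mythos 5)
Your proposal is correct and is essentially the paper's argument: your Gram matrix is exactly the paper's $XX^T$ with $X=\begin{pmatrix} I_m\\ B\end{pmatrix}$ ($B$ the vertex--edge incidence matrix), giving $\mrp(G^{\triangle})\leq m$, and your star decomposition is an explicit instance of the paper's bound $T(G^{\triangle})\leq |G^{\triangle}|-\alpha(G^{\triangle})$ applied to the independent set of edge-vertices. The only cosmetic difference is that you build the tree cover by hand rather than citing the independence-number bound (Proposition \ref{treeindep}).
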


\begin{proof}
	We show that $\mrp(G^{\triangle})=\alpha(G^{\triangle})$ and then apply Proposition \ref{treeindep}. It is always the case that a connected graph $H$ has $\alpha(H)\leq \mrp(H)$ (see Corollary 2.7 in \cite{booth08}), so we show that $\mrp(G^{\triangle})\leq\alpha(G^{\triangle}).$ Let $B$ be the vertex-edge incidence matrix of $G$, and let $X=\begin{pmatrix} I_m\\ B \end{pmatrix}$, where $I_m$ is the $m\times m$ identity matrix. Then $XX^T=\begin{pmatrix}
	I_m & B^T\\B & BB^T\\
	\end{pmatrix}\in \mathcal{S}_+(G^{\triangle}),$ where the first $m$ rows and columns are indexed by the edge-vertices and the last $n$ rows and columns are indexed by the vertices in $V$. Note that the set of edge-vertices of $G^{\triangle}$ is an independent set of size $m$ and that the rank of $XX^T$ is $m$. So $\mrp(G^{\triangle})\leq m\leq \alpha(G),$ and therefore $\mrp(G^{\triangle})=\alpha(G^{\triangle})$. By Proposition \ref{treeindep}, $\TC(G^{\triangle})\leq m+n-\mrp(G^{\triangle})=\Mp(G^{\triangle}).$
\end{proof}

The {\em tree-width} of a graph $G$, denoted $\tw(G)$, is a widely studied parameter, and there are multiple ways in which it is defined. Here we define the tree-width in terms of chordal completions. A graph is {\em chordal} if it has no induced cycle on four or more vertices. If $G$ is a subgraph of $H$ such that $V(G)=V(H)$ and $H$ is chordal, then $H$ is called a {\em chordal completion} of $G$. The tree-width of $G$ is defined as \[\tw(G)=\min\{\omega(H)-1| H \text{ is a chordal completion of }G\}.\]
\begin{prop}
	Let $G$ be a graph on $n$ vertices with $\tw(G)\leq \frac{n-4}{2}$. Then $T(\overline{G})\leq \Mp(\overline{G}).$ 
\end{prop}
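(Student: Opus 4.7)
The plan is to combine Theorem \ref{treecovernover2}, which gives $T(\overline{G})\leq \lceil n/2\rceil$, with a lower bound on $\Mp(\overline{G})$ supplied by the tree-width hypothesis. Specifically, the target intermediate inequality is
\[
\Mp(\overline{G}) \;\geq\; n - \tw(G) - 2.
\]
Granted this, the hypothesis $\tw(G)\leq(n-4)/2$ forces $\Mp(\overline{G})\geq n/2$, and integrality of $\Mp$ then upgrades this to $\Mp(\overline{G})\geq \lceil n/2\rceil$, so $T(\overline{G})\leq \lceil n/2\rceil \leq \Mp(\overline{G})$. In the disconnected case for $\overline{G}$ one runs the argument componentwise, using additivity of $T$ and $\Mp$ across components; note that the tree-width bound forces the existence of one component of $\overline{G}$ of size greater than $n/2$ (otherwise $G$ would contain a complete multipartite subgraph of tree-width larger than $(n-4)/2$), which keeps this case in control.

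The intermediate inequality directly generalizes the fact $\Mp(\overline{T})\geq n-3$ from \cite{AIM08} used in the preceding theorem---the case $\tw(T)=1$. To prove it, I would invoke the chordal-completion definition of tree-width and fix a chordal $H\supseteq G$ on the same vertex set with $\omega(H)=\tw(G)+1$. Using a perfect elimination ordering $v_1,\dots,v_n$ of $H$, I would construct vectors $u_1,\dots,u_n\in\R^{\tw(G)+2}$ in reverse PEO order so that $\langle u_i,u_j\rangle=0$ exactly when $\{v_i,v_j\}\in E(G)$; the Gram matrix $A=XX\trans$ with rows $u_1,\dots,u_n$ then lies in $\mathcal{S}_+(\overline{G})$ with rank at most $\tw(G)+2$. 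The construction is feasible because the forward $H$-neighbors of $v_i$ form a clique of size at most $\omega(H)-1=\tw(G)$, so the orthogonality constraints on the new vector $u_i$ carve out a subspace of $\R^{\tw(G)+2}$ of dimension at least $2$, leaving room for a generic choice.

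The main obstacle is the ``extra orthogonalities'' introduced by fill-in edges of $H$: every edge $e\in E(H)\setminus E(G)$ is a required zero position for $\mathcal{S}_+(\overline{H})$ but a required \emph{nonzero} position for $\mathcal{S}_+(\overline{G})$. Hence one cannot simply port a low-rank matrix supported on $\overline{H}$ to $\overline{G}$; the construction must enforce orthogonality strictly on edges of $G$ while simultaneously breaking orthogonalities on the fill-in edges (and on all other non-edges of $G$). Verifying that the generic choice of $u_i$ in the leftover $2$-dimensional subspace at each PEO step can simultaneously break all these extra orthogonalities---for every pair $\{v_i,v_j\}$ with $j>i$ and $\{v_i,v_j\}\notin E(G)$---is the technical heart of the argument, handled by avoiding finitely many proper hyperplanes. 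Once the rank-$(\tw(G)+2)$ construction is in hand, the rest of the proof is the short arithmetic above.
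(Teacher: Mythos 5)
Your top-level argument is exactly the paper's: combine $T(\overline{G})\leq\left\lceil\frac{n}{2}\right\rceil$ (Theorem \ref{treecovernover2}) with $\Mp(\overline{G})\geq n-\tw(G)-2$ and integrality. The difference is that the paper obtains the key inequality $\mrp(\overline{G})\leq \tw(G)+2$ by citing Sinkovic and van der Holst \cite{SH11}, whereas you attempt to reprove it, and that is where your argument has a genuine gap. In your greedy construction along a reverse perfect elimination ordering, the feasible set for $u_i$ is the subspace $W_i=\bigl(\operatorname{span}\{u_j : j>i,\ \{v_i,v_j\}\in E(G)\}\bigr)^{\perp}$, of dimension at least $2$, and for each later non-neighbor $v_j$ of $v_i$ in $G$ you must avoid $W_i\cap u_j^{\perp}$. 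That set is a proper subspace of $W_i$ only if $u_j\notin W_i^{\perp}=\operatorname{span}\{u_l : l \text{ a forward } G\text{-neighbor of } v_i\}$; nothing in your construction rules out $u_j$ lying in that (up to $\tw(G)$-dimensional) span inside $\R^{\tw(G)+2}$, since $u_j$ was chosen generically only within its own feasible subspace $W_j$, which may itself meet or even lie inside that span. Guaranteeing the needed general position across all steps is precisely the nontrivial content of \cite{SH11}, which maintains a stronger inductive invariant; ``avoid finitely many proper hyperplanes'' does not suffice as stated because the sets to be avoided need not be proper.

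A smaller remark: your observation that $\overline{G}$ may be disconnected is fair (the paper silently applies Theorem \ref{treecovernover2}, stated for connected graphs, to $\overline{G}$), but your componentwise patch does not close the case either: with components of sizes $n_1,\dots,n_c$ one only gets $T(\overline{G})\leq\sum_i\left\lceil\frac{n_i}{2}\right\rceil$, which can exceed $\left\lceil\frac{n}{2}\right\rceil$ when several $n_i$ are odd, and knowing that one component has more than $\frac{n}{2}$ vertices does not by itself restore the bound. If you simply cite \cite{SH11} as the paper does, the proof reduces to three lines of arithmetic (modulo this connectivity point).
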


\begin{proof}
	If $\tw(G)\leq k$, then $\mrp(\overline{G})\leq k+2$ \cite{SH11}, i.e., $\Mp(\overline{G})\geq n-k-2. $ For $k=\frac{n-4}{2},$ it follows that $\Mp(\overline{G})\geq \frac{n}{2}$. Since $\Mp(\overline{G})$ is an integer, we have that $\Mp(\overline{G}) \geq \left \lceil \frac{n}{2}\right \rceil \geq T(\overline{G})$, where the last inequality follows from Theorem \ref{treecovernover2}. 
\end{proof}

A $k-$tree is constructed inductively by starting with a complete graph $K_{k+1}$ and at each step a new vertex is added and this vertex is adjacent to exactly $k$ vertices in an existing $K_k$.  A graph $G$ that is a $2-$tree is outerplanar by definition, so $T(G)=\Mp(G)$. Observation \ref{treelessthan3} and Theorem \ref{oddktree} show that $T(G)\leq \Mp(G)$ for $3$-trees and $5$-trees.

\begin{obs}{\rm \label{treelessthan3}If $G$ is a graph with $T(G)\leq 3,$ then it holds that $T(G)\leq \Mp(G)$. This is because of the fact that $\Mp(G)=1$ implies $G$ is a tree \cite{hvd} (so $T(G)=1$), and the facts that $T(G)\leq \Zp(G)$ \cite{EEHHH} (where $\Zp(G)$ is the positive semidefinite zero forcing number of a graph, defined in \cite{zfparam}) and $\Mp(G)=2$ implies $\Zp(G)=2 \cite{EEHHH}$ (so $T(G)\leq 2).$}\end{obs}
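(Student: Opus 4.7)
The plan is a straightforward case analysis on the value of $\Mp(G)$, leveraging three previously established facts cited in the excerpt. Since the hypothesis $T(G)\leq 3$ already pins down $T(G)\in\{1,2,3\}$, the only cases where the inequality $T(G)\leq\Mp(G)$ is not automatic are those with small $\Mp(G)$. When $\Mp(G)\geq 3$, the bound $T(G)\leq 3\leq \Mp(G)$ is immediate, so the work reduces to handling $\Mp(G)=1$ and $\Mp(G)=2$. Note that $\Mp(G)\geq 1$ for any graph on at least one vertex (the all-zero diagonal PSD matrix in $\mathcal{S}_+(G)$ when $G$ has no edges, or a standard construction otherwise), so these three regimes exhaust all possibilities.

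For the case $\Mp(G)=1$, I would invoke the characterization from \cite{hvd} that $\Mp(G)=1$ forces $G$ to be a tree. Any tree $G$ is itself an induced tree covering all of its vertices, so it admits a tree cover of size $1$, giving $T(G)=1=\Mp(G)$.

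For the case $\Mp(G)=2$, the strategy is to chain two inequalities from \cite{EEHHH}. First, $\Mp(G)=2$ implies that the positive semidefinite zero forcing number satisfies $\Zp(G)=2$. Second, the general bound $T(G)\leq \Zp(G)$ holds for all graphs. Combining these yields $T(G)\leq \Zp(G)=2=\Mp(G)$, as desired.

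There is essentially no obstacle in this argument: the observation is an assembly of pre-existing results, and the only non-trivial content is recognizing that the threshold $T(G)\leq 3$ precisely matches the known small-value classifications of $\Mp(G)$. The mild subtlety worth flagging is that the argument breaks down immediately for $T(G)=4$, since no comparable structural theorem is known to pin down the graphs with $\Mp(G)=3$; this is exactly why the observation stops at $T(G)\leq 3$ and why a separate technique (as developed in the remainder of the section for $k$-trees) is required to push the conjecture further.
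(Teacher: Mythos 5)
Your proposal is correct and follows exactly the same route as the paper's inline justification: a case split on $\Mp(G)\in\{1,2,\geq 3\}$, using the characterization of $\Mp(G)=1$ from \cite{hvd} and the chain $T(G)\leq \Zp(G)=2$ from \cite{EEHHH} when $\Mp(G)=2$. No substantive differences to report.
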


\begin{thm}\label{oddktree}{\rm \cite{graphcoverings}}
    Let $G$ be a $k-$tree with $k$ odd. If $G$ is a $k-$ tree, then $T(G)=\frac{k+1}{2}.$
\end{thm}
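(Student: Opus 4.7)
The plan is to prove $T(G)=\frac{k+1}{2}$ by sandwiching the tree cover number between matching lower and upper bounds, both of which exploit the same ``forest meets clique'' observation together with the parity of $k$.

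For the lower bound $T(G)\ge\frac{k+1}{2}$, I would use the fact that every $k$-tree contains an induced $K_{k+1}$ (the initial complete graph in the construction). Given any tree cover $\mathcal{T}$ of $G$ and any $T\in\mathcal{T}$, the vertex set $V(T)\cap V(K_{k+1})$ induces a subgraph of $G$ that is simultaneously an induced subgraph of the tree $T$ (hence a forest) and an induced subgraph of $K_{k+1}$ (hence a clique); the only graphs that are both are $K_0$, $K_1$, and $K_2$, so each tree contains at most two vertices of $V(K_{k+1})$. Covering all $k+1$ vertices of the clique therefore requires at least $\lceil(k+1)/2\rceil=\frac{k+1}{2}$ trees, since $k$ is odd.

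For the upper bound I would induct on $|V(G)|$. The base case $G=K_{k+1}$ is handled by partitioning the $k+1$ vertices into $\frac{k+1}{2}$ disjoint pairs and taking the induced edges as the cover. For the inductive step, let $v$ be the last vertex added in the $k$-tree construction, so $v$ is simplicial with $N(v)$ inducing a $K_k$, and $G'=G-v$ is again a $k$-tree. By induction $G'$ admits a tree cover $\mathcal{T}'$ with $|\mathcal{T}'|=\frac{k+1}{2}$. By the same forest-meets-clique reasoning applied inside the induced $K_k$ on $N(v)$, each tree in $\mathcal{T}'$ meets $N(v)$ in at most two vertices. Here the parity of $k$ enters: if every $T\in\mathcal{T}'$ met $N(v)$ in $0$ or $2$ vertices, then $\sum_{T\in\mathcal{T}'}|V(T)\cap N(v)|$ would be even, contradicting $|N(v)|=k$ odd. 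Hence some $T^{*}\in\mathcal{T}'$ satisfies $|V(T^{*})\cap N(v)|=1$; attaching $v$ to $T^{*}$ introduces exactly one new edge to the induced subgraph on $V(T^{*})\cup\{v\}$, so the enlargement is still an induced tree, and replacing $T^{*}$ by $T^{*}\cup\{v\}$ in $\mathcal{T}'$ yields a tree cover of $G$ of the same size $\frac{k+1}{2}$.

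The main obstacle is precisely the parity argument in the inductive step: it is the oddness of $k$ that guarantees a tree meeting $N(v)$ in exactly one vertex and thereby allows the simplicial vertex to be absorbed for free. If $k$ were even the trees in $\mathcal{T}'$ could distribute $N(v)$ into $k/2$ pairs and leave no cheap slot for $v$, which is consistent with the statement being specifically an odd-$k$ phenomenon. Everything else is bookkeeping about the $k$-tree construction and the standard observation that induced subgraphs of trees are forests.
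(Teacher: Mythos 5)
Your proposal is correct, but note that the paper does not actually prove this theorem: it is stated with a citation to \cite{graphcoverings} and used as a black box, so there is no internal proof to compare against. Your argument stands on its own. The lower bound is sound: each tree of a tree cover is an induced subgraph of $G$, so its intersection with the initial $K_{k+1}$ induces a graph that is both a forest and a clique, hence has at most two vertices, forcing at least $\left\lceil\frac{k+1}{2}\right\rceil=\frac{k+1}{2}$ trees. The upper bound is also sound: for the inductive step the last-added vertex $v$ is simplicial with $N(v)$ inducing $K_k$, the trees of a cover of $G-v$ of size $\frac{k+1}{2}$ each meet $N(v)$ in at most two vertices, and since $\sum_{T}|V(T)\cap N(v)|=k$ is odd, some tree meets $N(v)$ in exactly one vertex $w$; as $v$ is adjacent to no vertex of that tree other than $w$, appending $v$ keeps the tree induced and acyclic. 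The one point worth making explicit is that every $k$-tree on more than $k+1$ vertices has a simplicial vertex of degree $k$ whose deletion leaves a $k$-tree (immediate from the construction order), which justifies the induction. Your closing remark about why the argument needs $k$ odd is also accurate: for even $k$ the parity obstruction disappears and the formula changes, which is consistent with the paper restricting the statement to odd $k$.
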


\begin{cor}
    For $k\in \{3,5\},$ $T(G)\leq \Mp(G).$
\end{cor}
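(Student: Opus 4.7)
The plan is to simply combine Theorem \ref{oddktree} with Observation \ref{treelessthan3}. By Theorem \ref{oddktree}, if $G$ is a $k$-tree with $k$ odd, then $T(G) = \frac{k+1}{2}$. Specializing to $k = 3$ and $k = 5$ gives $T(G) = 2$ and $T(G) = 3$, respectively, so in both cases $T(G) \leq 3$.

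Now invoke Observation \ref{treelessthan3}, which already packages the case analysis for graphs with small tree cover number: a graph with $T(G) = 1$ has $\Mp(G) \geq 1$ trivially (every graph on at least one vertex does), and for $T(G) \in \{2,3\}$ one uses the facts that $\Mp(G)=1$ forces $G$ to be a tree (so $T(G)=1$) and $\Mp(G)=2$ forces $\Zp(G)=2 \geq T(G)$. Thus in all cases $T(G) \leq 3$ implies $T(G) \leq \Mp(G)$, and the corollary follows immediately for $k \in \{3,5\}$.

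There is essentially no obstacle here beyond noticing that $\frac{k+1}{2} \leq 3$ for $k \in \{3,5\}$ places us exactly within the scope of Observation \ref{treelessthan3}. The proof is a one-line application of the two results, and no additional structural analysis of $k$-trees is needed.
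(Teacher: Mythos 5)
Your proof is correct and matches the paper's intended argument exactly: the paper itself states that Observation \ref{treelessthan3} and Theorem \ref{oddktree} together yield the corollary, which is precisely your computation that $T(G)=\frac{k+1}{2}\leq 3$ for $k\in\{3,5\}$ followed by the observation.
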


\section{$T(G)$ of graphs with girth at least 5}\label{sectionbounds}For many graphs, the tree cover number is much lower than $\left\lceil\frac{n}{2}\right\rceil.$ The next theorem improves this bound for graphs with girth at least 5.

\begin{thm}{\label{nthree}}
	Let $G$ be a connected graph on $n\geq 6$ vertices with girth at least 5. Then $T(G)\leq \frac{n}{3}.$
\end{thm}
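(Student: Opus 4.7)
The plan is to prove Theorem~\ref{nthree} by strong induction on $n$, following the template of the proof of Theorem~\ref{treecovernover2} but producing an induced subtree on at least three vertices at each inductive step. The base cases $n \in \{6,7,8\}$ will be handled by direct inspection: connected graphs with girth at least~$5$ and so few vertices are highly restricted (for $n = 6$ the only possibilities are trees, $C_6$, and $C_5$ with a pendant edge), and one verifies $T(G) \leq 2 \leq n/3$ in each case, typically by exhibiting an explicit two-tree cover.

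For the inductive step with $n \geq 9$, the standard reductions come first. If $G$ has a leaf $v$, then by Proposition~3.3 of~\cite{BFMN11} we have $T(G) = T(G - v)$, and the induction hypothesis applied to $G - v$ (which is connected, has girth $\geq 5$, and has $n - 1 \geq 8$ vertices) gives $T(G) \leq (n-1)/3 < n/3$. So one may assume $\delta(G) \geq 2$. If $G$ has a cut vertex $c$ with $G = G_1 \oplus_c G_2$, the cut-vertex reduction $T(G) = T(G_1) + T(G_2) - 1$ (Proposition~\ref{cutvertextree}) combined with the induction hypothesis on each $G_i$ yields the bound after a short case check that handles the subcases when some $G_i$ has fewer than six vertices; the point here is that a connected graph on at most five vertices with girth $\geq 5$ is either a tree or $C_5$, both of which admit immediate bounds on $T$. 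Hence we may further assume that $G$ is $2$-connected.

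The technical heart of the proof, and the main obstacle, is then the following analog of Lemma~\ref{deletestar}: in a $2$-connected graph $G$ with girth $\geq 5$, $\delta(G) \geq 2$, and $n \geq 9$, there exists an induced subtree $H$ of $G$ with $|V(H)| \geq 3$ such that $G - V(H)$ is connected. The girth-$\geq 5$ hypothesis supplies the natural candidates: for every vertex $v$, the closed neighborhood $N[v]$ induces the star $K_{1, \deg(v)}$ (no two neighbors of $v$ can be adjacent, lest a triangle arise), which is an induced subtree on $\deg(v) + 1 \geq 3$ vertices. I would prove the lemma by a case analysis in the spirit of Lemma~\ref{deletestar}: tentatively take $H = N[v]$ for a vertex $v$ of minimum degree at least~$2$, and if $G - V(H)$ is disconnected, use the $2$-connectedness of $G$ together with the sparsity imposed by girth $\geq 5$ to adjust $H$ locally — either pruning a star leaf or absorbing a vertex of a problematic component of $G - V(H)$ — until the residual graph is connected. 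Once the lemma is established, the induction closes immediately: $T(G) \leq 1 + T(G - V(H)) \leq 1 + (n-3)/3 = n/3$. The delicate bookkeeping in the case analysis (in particular, ensuring that the local adjustments always terminate and that residual components like $C_5$ do not spoil the bound for small $n$) is the principal work of the proof.
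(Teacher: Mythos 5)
Your overall architecture (induction, leaf and cut-vertex reductions, then remove an induced tree on at least three vertices) is reasonable, and the final accounting does close once your key lemma is granted. But the lemma is exactly the hard part of this approach, and you have not proved it: the assertion that in a $2$-connected graph with girth at least $5$ one can always find an induced subtree $H$ with $|V(H)|\geq 3$ such that $G-V(H)$ is connected is only sketched as ``adjust $H$ locally \dots until the residual graph is connected,'' with no specification of the adjustment moves, no argument that they terminate, and no argument that they preserve $|V(H)|\geq 3$. Note that Lemma \ref{deletestar} only guarantees a star $K_{1,p}$ with $p\geq 1$, i.e.\ possibly a single edge, so it does not hand you this strengthening; and the natural candidate $H=N[v]$ can certainly disconnect the graph (the whole content of the lemma is recovering from that). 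Until this lemma is actually established, the inductive step does not go through. A secondary gap is the base cases: for $n\in\{7,8\}$ you defer to ``direct inspection'' of all connected graphs with girth at least $5$, which is a nontrivial family (you need, e.g., $T(G)\leq 2$ for every such graph on $7$ vertices), and none of that inspection is carried out.

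It is worth knowing that the paper's proof deliberately avoids your lemma. It removes an induced path $P=(x,y,z)$ and does \emph{not} require $G-P$ to be connected; instead it analyzes the connected components of $G-P$ one at a time. Components of order $3$, $4$, or $5$ are either covered by a single tree (applying induction to the connected graph $G-H$) or force $G$ into a handful of small explicit configurations; after that, every component is either $K_2$ or has at least $6$ vertices, and the girth hypothesis forces each $K_2$ component to have one endpoint adjacent to $x$ and the other to $z$, so that $P$ together with all $K_2$ components can be covered by two trees while induction handles the large components. That component-by-component strategy is what substitutes for the connectivity-preserving deletion you are trying to prove, and if you want to keep your route you will need to supply a genuine proof of your lemma (or switch to the paper's decomposition).
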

\begin{proof}
	The proof is by induction on $n$. A connected graph on 6 vertices with girth at least 5 is either a tree, $C_6$, or $C_5$ with a leaf adjacent to one of the vertices on the cycle. In each case, the tree cover number is at most 2, so the theorem holds.   Let $n\geq 7$. If $G$ has a leaf $v$, then $T(G)=T(G-v) \leq \frac{n-1}{3}$. Suppose $G$ has no leaves. Let $P=(x,y,z)$ be an induced path in $G$. We consider the connected components of $G-P$ (see Figure \ref{fig1}).

	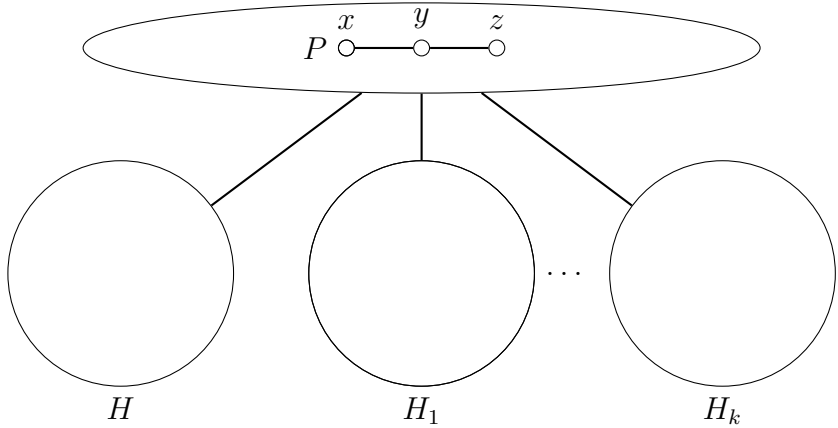
\begin{figure}[h!]
		\begin{center}
			\begin{tikzpicture}[scale=1]
			
			\node [label=above:] [draw, ellipse, minimum height=1.2cm, minimum width=9cm] (elli) at (6.5,6) {};
			
			\node [label=below: $H$  ] [draw, ellipse, minimum height=3cm, minimum width=3cm] (C1) at (2.5,3) {};
			
			\node [label=right:$\hdots$ ] [draw, ellipse, minimum height=3cm, minimum width=3cm] (C2) at (6.5,3) {};
			
			\node [label=below: $H_1$ ] [draw, ellipse, minimum height=3cm, minimum width=3cm] (C2) at (6.5,3) {};
			
			\node [label=below:$H_k$ ] [draw, ellipse, minimum height=3cm, minimum width=3cm] (C3) at (10.5,3) {};
			
			\vertex (x)[label=left: $P$] at (5.5,6) {};
			\vertex (x)[label=above: $x$] at (5.5,6) {};
			\vertex (y) [label=above: $y$] at (6.5,6) {};
			\vertex (z)[label=above: $z$] at (7.5,6) {};
			
			\tikzset{EdgeStyle/.style={-}}
			\Edge(elli)(C1)
			\Edge(elli)(C2)
			\Edge(elli)(C3)
			
			\Edge(x)(y)
			\Edge(z)(y)
			\end{tikzpicture}
			
			\caption{The partition described in proof of Theorem \ref{nthree}, where $H,H_1,...,H_k$ are the connected components of $G-P$.}
			\label{fig1}
		\end{center}
	\end{figure}

	Note that since $G$ has no leaves and no three or four cycles, $G-P$ cannot have an isolated vertex as a connected component. We now show that if $G-P$ has a connected component $H$ with $|H|\in \{3,4,5\}$, then the theorem holds. Suppose $G-P$ has a connected component $H$ of order 3 (i.e., $H$ is a path on three vertices). Note that $G-H$ is a connected graph (since the remaining components of $G-P$ are all connected to $P$), so if $|G-H|\geq 6$, by applying the hypothesis to $G-H$ and covering $H$ with a path to get that $T(G)\leq 1+\frac{n-3}{3}=\frac{n}{3}$. Otherwise, $|G-H|=5$ since $n\geq 7$ and $G-P$ does not have an isolated vertex as a component, so $G-H-P=K_2$. By assumption $G$ has no leaves and no three or four cycles, so $G-H=C_5$, $G$ is one of the two graphs shown in Figure \ref{fig:gggg} and the theorem holds.
	
	Suppose that $G-P$ has a connected component $H$ of order 4. Then $H$ is a tree. If $|G-H|\geq 6$, then $T(G)\leq 1+\frac{n-4}{3}=\frac{n-1}{3}$. If $G-H=P$, then $T(G)=2$, $n=7$, and the theorem holds. Otherwise $G-H=C_5$, $T(G)\leq 3$ (since $G-H$ may be covered with 2 trees and $H$ is a tree, $n=9$, and the theorem holds. 
	
	Consider $G-P$ having a connected component $H$ of order 5. Then $H$ is either a tree or $H=C_5$. Assume first that $H$ is a tree. If $|G-H|\geq 6,$ then $T(G)\leq 1 + \frac{n-5}{3}=\frac{n-2}{3}.$ If $G-H=P,$ then $T(G)=2, n=8,$ and the theorem holds. Otherwise, $G-H=C_5$, $T(G)\leq 3$, $n=10$, and the theorem holds. 
	
	Suppose $H=C_5=(u_1,\ldots.,u_5)$, and without loss of generality, assume that $u_1$ has a neighbor on $P=(x,y,z)$. If $G-H=P,$ then $n=8$ and for $T_1=G[\{u_2,u_3,u_4,u_5\}]$ and $T_2=G[\{x,y,z,u_1\}]$, $\mathcal{T}=\{T_1,T_2\}$ is a tree cover of size 2. Otherwise, for path $P'=(u_2,u_3,u_4,u_5)$, $G-P'$ is a connected graph on at least 6 vertices, so $T(G)\leq 1+\frac{n-4}{3}=\frac{n-1}{3}$.
	
	We may now assume that each component of $G-P$ is $K_2$ or has at least $6$ vertices. If all components of $G-P$ are of order at least 6, then by the induction hypothesis, $T(G)\leq 1+\frac{n-3}{3}=\frac{n}{3}$. Suppose $G-P$ has exactly one component that is $K_2=(u,v)$. Since $G$ has no leaves then each of $u$ and $v$ must be adjacent to a vertex of $P$, and since $G$ has no three or four cycles, then $u$ must be adjacent to $x$ and $v$ must be adjacent to $z$. Furthermore, since $n\geq 7,$ then $G-P$ must have a component $H$ with at least $6$ vertices. Note that $H$ has a vertex that is adjacent to some $r\in \{x,y,z\}$. By adding $r$ to $H$, we partition $G$ into a tree (namely, the tree with vertex set $\{x,y,z,u,v\}\setminus\{r\}$) and connected components of order at least 6. Thus, $T(G)\leq 1+\frac{n-4}{3}=\frac{n-1}{3}.$
	
	Suppose $G-P$ has $s\geq 2$ components that are $K_2$. We first show that the vertices of $P=(x,y,z)$ and the vertices of each $K_2$ can be covered with two trees: recall that since $G$ has no leaves then each endpoint of a $K_2$ must be adjacent to a vertex of $P$, and since $G$ has no three or four cycles, then for each $K_2$, one endpoint must be adjacent to $x$ and the other end must be adjacent to $z$. Let $X$ be the set of endpoints that are adjacent to $x$  and let $Z$ be the set of of endpoints that are adjacent to $z$. Then for $T_1=G[X\cup \{x\}]$ and $T_2=G[Z\cup \{z,y\}]$, $\mathcal{T}=\{T_1, T_2\}$ is a tree cover of size two that covers the vertices of $P$ and the vertices of $G-P$ belonging to a $K_2$. We apply the induction hypothesis to each component of $G-P$ with at least 6 vertices to get that $T(G)\leq 2+\frac{n-3-2s}{3}\leq \frac{n-1}{3}$. \end{proof}
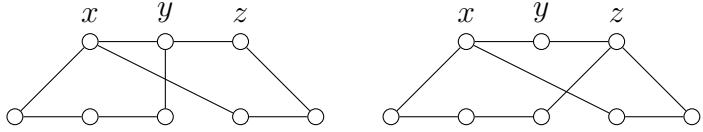
\begin{figure}[h!]
	\begin{center}
		\begin{tikzpicture}[scale=1.0]
		\vertex [label=above:$x$](1) at (0,0) {};
		\vertex [label=above:$y$](2) at (1,0) {};
		\vertex [label=above:$z$](3) at (2,0) {};
		\vertex (4) at (-1,-1){};
		\vertex (5) at (0,-1){};
		\vertex (6) at (1,-1){};
		\vertex(7) at (2,-1){};
		\vertex(8) at (3,-1){};

		\draw (1) to (2);
		\draw (2) to (3);
		\draw(4) to (5);
		\draw(5) to(6);
		\draw (7) to (8);
		\draw(4) to (1);
		\draw(6) to (2);
		\draw(7) to (1); 
		\draw(8) to (3);
		
		\vertex [label=above:$x$](11) at (5,0) {};
		\vertex [label=above:$y$](12) at (6,0) {};
		\vertex [label=above:$z$](13) at (7,0) {};
		\vertex (14) at (4,-1){};
		\vertex (15) at (5,-1){};
		\vertex (16) at (6,-1){};
		\vertex(17) at (7,-1){};
		\vertex(18) at (8,-1){};

		\draw (11) to (12);
		\draw (12) to (13);
		\draw(14) to (15);
		\draw(15) to(16);
		\draw (17) to (18);
		\draw(14) to (11);
		\draw(16) to (13);
		\draw(17) to (11); 
		\draw(18) to (13);
		
		x		\end{tikzpicture}
		\caption{Graphs in Proof of Theorem \ref{nthree}}
		\label{fig:gggg}
	\end{center}
	
\end{figure}
\begin{cor}\label{girth5nover3}
	If $G$ is a connected outerplanar graph on $n$ vertices with girth at least 5, then $\Mp(G)\leq \frac{n}{3}$. 
\end{cor}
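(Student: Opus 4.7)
The corollary is an almost immediate consequence of Theorem \ref{nthree} together with the equality $\Mp(G) = T(G)$ established for outerplanar graphs in \cite{BFMN11}. My plan is therefore to combine these two facts: for a connected outerplanar graph $G$ on $n \geq 6$ vertices with girth at least $5$, Theorem \ref{nthree} yields $T(G) \leq \frac{n}{3}$, and since $G$ is outerplanar, $\Mp(G) = T(G) \leq \frac{n}{3}$.

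The only subtlety to address is the range of $n$. Theorem \ref{nthree} carries a hypothesis $n \geq 6$, so I would either state the corollary under the same assumption or handle the small cases directly. For $n \leq 5$, a connected graph with girth at least $5$ is either a tree or the five-cycle $C_5$ (the only cycle of length at least $5$ on at most $5$ vertices). For a tree, $\Mp(G) = 1$, and for $C_5$, $\Mp(C_5) = 2$ (it is outerplanar and well-known to have tree cover number $2$), so the bound $\Mp(G) \leq \frac{n}{3}$ is easily verified except in trivially small cases where the statement is vacuous or needs mild adjustment.

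There is no real obstacle here; the heavy lifting was done in Theorem \ref{nthree}. The corollary is essentially a one-line deduction. I would write:

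\begin{proof}
If $n \geq 6$, then by Theorem \ref{nthree} we have $T(G) \leq \frac{n}{3}$. Since $G$ is outerplanar, $\Mp(G) = T(G)$ by \cite{BFMN11}, so $\Mp(G) \leq \frac{n}{3}$. For the remaining small values of $n$ with a cycle of length at least $5$ present (namely $n=5$ with $G=C_5$), the bound is verified directly from $\Mp(C_5)=2$ and the fact that any such $G$ on fewer vertices is a tree with $\Mp(G)=1$.
\end{proof}
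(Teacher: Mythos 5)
Your main deduction is exactly the paper's (implicit) proof: Theorem \ref{nthree} gives $T(G)\leq \frac{n}{3}$ and the equality $\Mp(G)=T(G)$ for outerplanar graphs from \cite{BFMN11} finishes it. One correction to your small-case discussion: $\Mp(C_5)=T(C_5)=2>\frac{5}{3}$, so the bound is \emph{not} verified for $C_5$; this shows the hypothesis $n\geq 6$ (which appears in the abstract's version of this statement) must be retained rather than dispensed with.
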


Triangle-free graphs are a family of widely studied graphs, so an interesting question is whether or not the bound given in Corollary \ref{girth5nover3} holds when girth is at least 4. The cycle on four vertices demonstrates that the bound no longer holds. However, computations in Sage suggest the next conjecture.
\begin{conj}
	For all connected triangle-free graphs, $T(G)\leq\left\lceil\frac{n}{3}\right\rceil$. 
\end{conj}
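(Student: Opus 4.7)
The plan is to prove the conjecture by induction on $n$, adapting the strategy of Theorem~\ref{nthree} to accommodate the additional configurations (particularly induced $4$-cycles) that arise once the girth is relaxed from $5$ to $4$. First I would verify the base cases $n \leq 6$ by enumerating connected triangle-free graphs of small order ($C_4$, $C_5$, $C_6$, $K_{2,3}$, $K_{3,3}$, $K_{2,4}$, trees, and small variants) and directly constructing tree covers of size at most $\lceil n/3 \rceil$.

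For the inductive step $n \geq 7$, if $G$ has a leaf $v$ then $T(G) = T(G-v) \leq \lceil (n-1)/3 \rceil \leq \lceil n/3 \rceil$, so assume $G$ has minimum degree at least $2$. I pick an induced $P_3$ subgraph $P = (x, y, z)$. Triangle-freeness forces $N(w) \cap V(P) \in \{\emptyset, \{x\}, \{y\}, \{z\}, \{x, z\}\}$ for every $w \notin V(P)$, since $\{x, y\}$ and $\{y, z\}$ would each close a triangle with the corresponding edge of $P$. I then analyze the components of $G - P$ along the lines of Theorem~\ref{nthree}: components of sizes $3$, $4$, $5$ are covered by explicit induced trees (with the new wrinkle that an $H = C_4$ component requires two trees instead of one), and components of size $\geq 6$ are handled by applying the inductive hypothesis to each.

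The crucial new case is when every component of $G - P$ is either an isolated vertex (necessarily adjacent to both $x$ and $z$, by the minimum degree condition) or a $K_2$ with one of four attachment types, classified by the pair $(N(u) \cap V(P), N(v) \cap V(P))$: (A) $(\{x\}, \{z\})$, (B) $(\{x\}, \{y\})$, (B$'$) $(\{y\}, \{z\})$, or (C) $(\{y\}, \{x, z\})$. In this reduced setting I construct an explicit three-tree cover: the star $T_1$ centered at $x$ contains all isolated vertices together with the endpoints attached to $x$ from Types~A, B, and~C; the star $T_2$ centered at $z$ contains endpoints attached to $z$ from Types~A and~B$'$; and the star $T_3$ centered at $y$ contains endpoints attached to $y$ from Types~B, B$'$, and~C. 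Triangle-freeness together with the vertex-disjointness of distinct components of $G - P$ ensures that each $T_i$ is an induced star and that every vertex of $G$ is covered, giving $T(G) \leq 3 \leq \lceil n/3 \rceil$ for $n \geq 7$.

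I expect the main technical obstacle to arise in bookkeeping when $G - P$ has a mix of large ($\geq 6$) and medium components, since naively summing $\lceil |C_i|/3 \rceil$ from the inductive hypothesis can exceed $\lceil n/3 \rceil$; for instance, two components each of size $4$ contribute $\lceil 4/3 \rceil + \lceil 4/3 \rceil = 4$ trees, while the target for those $8$ vertices is only $\lceil 8/3 \rceil = 3$. I plan to address this by merging one tree in the cover of a designated large component $C_i$ with the tree covering $P$ (and any small components of $G - P$), extending the $C_i$-tree to include the vertex of $P$ adjacent to $C_i$. Verifying that such a merger preserves inducedness and acyclicity—using triangle-freeness to preclude short cycles—across all inductive cases is the main technical hurdle, which I expect to be resolvable by a uniform charging argument.
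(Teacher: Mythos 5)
First, note that the statement you are trying to prove is stated in the paper as a \emph{conjecture}: the author offers no proof, only the remark that Sage computations suggest it (and that the non-ceiling bound $n/3$ already fails for $C_4$). So there is no proof in the paper to compare against, and a correct argument here would be a genuine new result rather than a reconstruction.

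As written, your proposal has a real gap that you yourself flag but do not close: the ceiling arithmetic when $G-P$ has several components handled by induction. Each component $C_i$ contributes $\lceil |C_i|/3\rceil \leq (|C_i|+2)/3$, so with $k$ such components the naive total is about $1+\frac{n-3+2k}{3}$, which exceeds $\lceil n/3\rceil$ as soon as $k\geq 2$ and the individual bounds are tight (e.g.\ three components of size $7$ each realizing tree cover number $3$ would force $1+9=10$ trees against a target of $\lceil 24/3\rceil = 8$). Your proposed fix---merging one tree of one designated large component with the tree covering $P$---saves only a single tree, while the ceiling losses accumulate at up to two-thirds per component; a ``uniform charging argument'' would have to recover roughly $\frac{2}{3}(k-1)$ trees, and you have not indicated how. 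This is exactly the point where the paper's own Theorem~\ref{nthree} avoids trouble: under the girth-$5$ hypothesis the inductive bound is the \emph{non-ceiling} $|C_i|/3$, so the sum telescopes cleanly to $n/3$; with only triangle-freeness you lose that additivity and the whole induction scheme needs to be redesigned (for instance by proving a stronger statement that is additive over components, or by controlling the number of components of $G-P$). Separately, your treatment of small components of $G-P$ is much sketchier than it can afford to be once girth $4$ is allowed: components such as $C_4$, $K_{2,3}$, or a $C_4$ with pendants need two trees rather than one, and the arithmetic $2+\lceil(n-|H|)/3\rceil\leq\lceil n/3\rceil$ fails for many residues of $n$ modulo $3$, so those cases require genuinely new constructions (e.g.\ absorbing part of the component into a tree containing a vertex of $P$), not just the observation that ``a $C_4$ component requires two trees.'' The part of your argument that does work cleanly is the terminal case in which every component of $G-P$ is an isolated vertex or a $K_2$: your classification of attachment types and the three-star cover centered at $x$, $y$, $z$ is correct and is a nice extension of the corresponding step in Theorem~\ref{nthree}. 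But because of the unresolved component-summation issue, the proposal does not constitute a proof of the conjecture.
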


\section{$\Mp$ and $T$ for connected outerplanar graphs}\label{sectionouterplanar}
We now turn our attention specifically to connected outerplanar graphs on $n\geq 2$. We have seen that $\Mp(G)=T(G)\leq \left\lceil\frac{n}{2}\right\rceil$ for these graphs, and in this section we characterize graphs that achieve this upper bound. 

Let $\mathcal{F}$ denote the block-clique graphs such that each clique is $K_3$ (see Figure \ref{fig:blockclique}). Observe that every graph in $\mathcal{F}$ has an odd number of vertices. We show that for $n$ odd, the family of graphs whose tree cover number achieves the upper bound is exactly the family $\mathcal{F}$. The family $\mathcal{F}$ also plays a vital role in characterizing outerplanar graphs on an even number of vertices whose tree cover number achieves the upper bound.

We begin by first stating the results that provide this characterization.
\begin{figure}[h!]
	\begin{center}
		\begin{tikzpicture}[scale=1.5]
		\GraphInit[vstyle=Classic]
		\vertex (0) at (1.6971, 3.0) {}; 
		\vertex (1) at (1.0401, 2.979) {}; 
		\vertex (2) at (1.4343, 2.2972) {}; 
		\vertex (3) at (0.6131,1.8252 ) {}; 
		\vertex (4) at (1.3467, 1.5629) {}; 
		\vertex (5) at (2.2993, 2.6119) {}; 
		\vertex (6) at (2.0693, 2.0245) {};
		\vertex (7) at (1.9927, 1.3112) {}; 
		\vertex (8) at (2.6277,1.7203 ) {};
		\vertex (9) at (1.6752, 0.8287) {}; 
		\vertex (10) at (2.2445, 0.6923) {}; 
		\vertex (11) at (0.6569, 1.2378) {}; 
		\vertex (12) at (0.0, 1.4685) {}; 
		\vertex (13) at (0.2847, 0.6818) {}; 
		\vertex (14) at (0.9088,0.5455 ) {}; 
		\vertex (15) at (0.1642, 2.4965) {}; 
		\vertex (16) at (0.8869, 2.5385) {}; 
		\vertex (17) at (2.3869, 0.0) {}; 
		\vertex (18) at (2.8686, 0.3776) {}; 
		\vertex (19) at (2.6606, 1.3007) {}; 
		\vertex (20) at (3.0, 0.8706 ) {}; 
		\vertex (21) at (1.1715,0.3252 ) {}; 
		\vertex (22) at (1.8285, 0.2203) {}; 
		\Edge[](0)(1)
		\Edge[](0)(2)
		\Edge[](1)(2)
		\Edge[](2)(3)
		\Edge[](2)(4)
		\Edge[](2)(5)
		\Edge[](2)(6)
		\Edge[](3)(4)
		\Edge[](3)(11)
		\Edge[](3)(12)
		\Edge[](3)(15)
		\Edge[](3)(16)
		\Edge[](5)(6)
		\Edge[](6)(7)
		\Edge[](6)(8)
		\Edge[](7)(8)
		\Edge[](7)(9)
		\Edge[](7)(10)
		\Edge[](9)(10)
		\Edge[](9)(21)
		\Edge[](9)(22)
		\Edge[](10)(17)
		\Edge[](10)(18)
		\Edge[](10)(19)
		\Edge[](10)(20)
		\Edge[](11)(12)
		\Edge[](11)(13)
		\Edge[](11)(14)
		\Edge[](13)(14)
		\Edge[](15)(16)
		\Edge[](17)(18)
		\Edge[](19)(20)
		\Edge[](21)(22)
		\end{tikzpicture}
	\end{center}
	\caption{A block-clique graph such that each clique is $K_3$.}
	\label{fig:blockclique}
\end{figure}
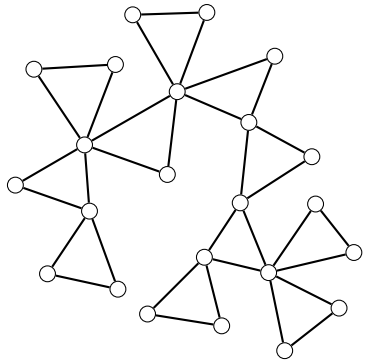

\begin{thm}\label{outerplanarodd}
	Let $G$ be a connected outerplanar graph of odd order $n\geq 3$. Then $T(G)=\left\lceil\frac{n}{2}\right\rceil$ if and only if $G\in \mathcal{F}$. 
\end{thm}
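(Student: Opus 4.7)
The plan is to prove both directions by induction on the odd order $n$, where the base case $n = 3$ is immediate because the only connected graphs on three vertices are $P_3$ (with $T = 1$) and $K_3$ (with $T = 2$), and only $K_3$ lies in $\mathcal{F}$. For the ``if'' direction at larger $n$, I would use induction on the number $t$ of triangle blocks of $G \in \mathcal{F}$: the block-cut tree of $G$ has a leaf triangle $B$ attached to the rest at a single cut vertex, and applying Proposition~\ref{cutvertextree} with $G' = G - \{x,y\}$ (where $x, y$ are the two non-cut vertices of $B$) gives $T(G) = T(G') + T(B) - 1 = t + 2 - 1 = t + 1$; a quick count of vertices through the block-cut tree shows $n = 2t+1$, so $T(G) = (n+1)/2 = \lceil n/2 \rceil$ as required.

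For the inductive step of the ``only if'' direction, let $G$ be connected outerplanar on odd $n \geq 5$ with $T(G) = (n+1)/2$, and split into the cases of whether $G$ has a cut vertex. If $G = G_1 \oplus_v G_2$, then $|G_1| + |G_2| = n + 1$ is even, so the two orders have equal parity. Combining Proposition~\ref{cutvertextree} with the bound $T(G_i) \leq \lceil |G_i|/2 \rceil$ from Theorem~\ref{treecovernover2}, both $|G_i|$ even would force $T(G) \leq (n-1)/2$, contradicting the hypothesis. Hence both $|G_i|$ are odd and at least $3$, and the equality $T(G) = (n+1)/2$ forces $T(G_i) = (|G_i|+1)/2 = \lceil |G_i|/2 \rceil$ for $i = 1,2$. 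Each $G_i$ is itself a connected outerplanar graph of smaller odd order, so by induction $G_1, G_2 \in \mathcal{F}$; since the blocks of a vertex-sum are precisely the blocks of the summands, $G$ is block-clique with every block a $K_3$, i.e., $G \in \mathcal{F}$.

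The main obstacle -- and the only really outerplanar-specific step -- is ruling out the 2-connected case for $n \geq 5$ odd. For this I would use the structural fact that a $2$-connected outerplanar graph on $n$ vertices has its outer face bounded by a (unique) Hamilton cycle $C = v_1 v_2 \cdots v_n v_1$, with all remaining edges being non-crossing chords. I claim that for $n \geq 4$ there must exist three consecutive cycle vertices $v_i, v_{i+1}, v_{i+2}$ inducing a $P_3$ (i.e., with $v_i v_{i+2} \notin E$); otherwise both $v_1 v_3$ and $v_2 v_4$ would be chords of $C$, and these ``consecutive'' distance-$2$ chords must cross in any outerplanar embedding of $C$ (since $v_2$ lies strictly between $v_1$ and $v_3$ on the cycle while $v_4$ does not), contradicting outerplanarity. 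Deleting the induced $P_3$ $H = \{v_i, v_{i+1}, v_{i+2}\}$ leaves the remaining $n - 3$ vertices as a consecutive arc of $C$, hence a connected subgraph, and Theorem~\ref{treecovernover2} yields $T(G) \leq 1 + \lceil (n-3)/2 \rceil = (n-1)/2 < (n+1)/2$, a contradiction. Thus in the inductive step $G$ must have a cut vertex, and the previous paragraph places $G$ in $\mathcal{F}$, completing the induction.
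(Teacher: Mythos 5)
Your proof is correct, but your forward direction takes a genuinely different route from the paper's. The paper proves that $T(G)=\left\lceil\frac{n}{2}\right\rceil$ implies $G\in \mathcal{F}$ by deleting an adjacent pair $u,v$ whose removal keeps $G$ connected (Lemma \ref{lem1}), applying induction to conclude $G-\{u,v\}\in\mathcal{F}$, and then analyzing how the edge $\{u,v\}$ attaches back: Lemma \ref{lem3}, applied to a minimum tree cover of $G-\{u,v\}$ in which every tree is a $K_2$, forces each neighbor of $u$ in $G-\{u,v\}$ to also be a neighbor of $v$, and a $K_4$-minor argument forces exactly one common neighbor, so $G$ is obtained from a member of $\mathcal{F}$ by attaching a pendant triangle. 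You instead split on whether $G$ has a cut vertex: in the separable case you decompose at the cut vertex, combine Proposition \ref{cutvertextree} with the $\left\lceil |G_i|/2\right\rceil$ bound and a parity argument to push the extremal hypothesis onto both summands, and apply induction to each; in the $2$-connected case you use the Hamilton-cycle structure of $2$-connected outerplanar graphs to locate three consecutive outer-cycle vertices inducing a $P_3$ whose deletion leaves a connected arc, which caps $T(G)$ at $\frac{n-1}{2}$ and eliminates that case outright. Both arguments are sound; yours avoids the tree-cover surgery of Lemmas \ref{lem1} and \ref{lem3} entirely and isolates the one genuinely outerplanar step, at the cost of invoking the (standard, but not proved in the paper) fact that a $2$-connected outerplanar graph is a Hamilton cycle with non-crossing chords. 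The paper's attachment analysis, by contrast, builds machinery (the all-$K_2$ minimum tree covers and Lemma \ref{lem3}) that is reused heavily in the even-order characterization of Theorem \ref{outerplanareven}, so it earns its extra complexity there. Your treatment of the converse (pendant block plus cut-vertex reduction) coincides with the paper's.
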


\begin{cor}
	Let $G$ be a connected outerplanar graph of odd order $n$. Then $\Mp(G)=\left\lceil\frac{n}{2}\right\rceil$ if and only if $G\in \mathcal{F}$.
\end{cor}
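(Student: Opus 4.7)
The plan is to derive this corollary as an immediate consequence of Theorem \ref{outerplanarodd}, by invoking the equality $\Mp(G) = T(G)$ for outerplanar graphs established in \cite{BFMN11}. Since that equality is already quoted repeatedly in this paper (and used in, for example, Corollary \ref{girth5nover3}), no new graph-theoretic work is required: the characterization has already been done at the level of tree cover numbers.

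Concretely, I would write a one-line proof. Suppose $G$ is a connected outerplanar graph of odd order $n$. By the result of Barioli et al.\ \cite{BFMN11}, $\Mp(G) = T(G)$. Hence the equality $\Mp(G) = \lceil n/2 \rceil$ holds if and only if $T(G) = \lceil n/2 \rceil$, and by Theorem \ref{outerplanarodd} this in turn is equivalent to $G \in \mathcal{F}$.

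There is no real obstacle here, since all the substantive content lives in Theorem \ref{outerplanarodd}; the corollary is purely a translation through the identity $\Mp = T$ on the outerplanar class. The only thing to be mindful of is not to state anything stronger: the hypothesis that $G$ be outerplanar is essential, because $\Mp = T$ is only known in that setting (and more generally $T \le \Mp$ is still conjectural), and the hypothesis that $n$ be odd is essential because the characterization of equality for even $n$ is carried out separately later in Section \ref{sectionouterplanar}.
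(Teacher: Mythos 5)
Your proposal is correct and is exactly the paper's (implicit) argument: the corollary is stated without separate proof precisely because it follows from Theorem \ref{outerplanarodd} combined with the identity $\Mp(G)=T(G)$ for outerplanar graphs from \cite{BFMN11}. No further comment is needed.
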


The only connected graph of order $n=2$ is $K_2,$ which has tree cover number $\frac{n}{2}=1.$
\begin{thm}\label{outerplanareven}
	For a connected outerplanar graph $G=(V,E)$  of even order $n\geq 4$, $T(G)=\frac{n}{2}$ if and only if one of the following holds:
	\begin{enumerate}
		\item[(1)] $G$ is obtained from some $G'\in \mathcal{F}$ by adding one leaf.
		\item[(2)]  $G$ is obtained from some $G_1,G_2\in \mathcal{F}$ by connecting them with a bridge. 
		\item[(3)] $G$ is constructed from the following iterative process: Start with $G^{[0]}\in \{C_4, K_4-e, C_r^{\triangle} (\text{ for some } r\geq 3)\}$. For $i\geq 1$, pick a $v\in V(G^{[i-1]})$ and let $G^{[i]}=G^{[i-1]}\oplus_v K_3$.
	\end{enumerate}
\end{thm}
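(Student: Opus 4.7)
The plan is to address the two directions separately, the \emph{only if} direction by induction on $n$. For the \textbf{if} direction I would check each family using the preliminaries. Case~(1) is immediate: deleting a leaf does not change the tree cover number, so $T(G)=T(G')=\lceil(n-1)/2\rceil=n/2$ for $G'\in\mathcal{F}$ of order $n-1$. Case~(2) follows from Lemma~\ref{treebridge}: both $G_i\in\mathcal{F}$ have odd order, so $T(G)=T(G_1)+T(G_2)-1=(n_1{+}1)/2+(n_2{+}1)/2-1=n/2$. For case~(3), direct inspection yields $T(C_4)=T(K_4-e)=2$, while for $C_r^{\triangle}$ the earlier theorem on $G^{\triangle}$ gives $\mrp(C_r^{\triangle})=\alpha(C_r^{\triangle})=r$, so $\Mp(C_r^{\triangle})=r$; since $C_r^{\triangle}$ is outerplanar, $T(C_r^{\triangle})=\Mp(C_r^{\triangle})=r=n/2$. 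Each step $G^{[i]}=G^{[i-1]}\oplus_vK_3$ then adds two vertices and increases the tree cover number by $T(K_3)-1=1$ via Proposition~\ref{cutvertextree}, preserving $T=n/2$.

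For the \textbf{only if} direction, the base $n=4$ is handled by enumerating the connected outerplanar graphs on four vertices with $T=2$: these are $C_4$, $K_4-e$ (case~(3)), and $K_3$ with a pendant leaf (case~(1), since $K_3\in\mathcal{F}$). For the inductive step with $n\geq 6$, I would split on connectivity. If $G$ has a bridge $e$ with components $G_1,G_2$ of $G-e$ of orders $n_1,n_2$, the equation $T(G)=T(G_1)+T(G_2)-1=n/2$ combined with $T(G_i)\leq\lceil n_i/2\rceil$ (Theorem~\ref{treecovernover2}) forces both $n_i$ odd with each $T(G_i)=\lceil n_i/2\rceil$, and Theorem~\ref{outerplanarodd} places each $G_i\in\mathcal{F}$; this yields case~(1) if one $n_i=1$ and case~(2) otherwise. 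If $G$ has a cut vertex but no bridge, write $G=G_1\oplus_vG_2$ via Proposition~\ref{cutvertextree}; a parity check on $n_1+n_2=n+1$ forces exactly one $n_i$ odd, with both pieces attaining their maximum tree cover numbers. The odd piece lies in $\mathcal{F}$ by Theorem~\ref{outerplanarodd}; the even piece is extremal on fewer vertices, hence in case~(1), (2), or~(3) by induction. Cases~(1) and~(2) would contain a bridge of $G_2$ that remains a bridge of $G$ (regardless of whether that bridge meets $v$), contradicting our assumption, so the even piece lies in case~(3); rebuilding the $\mathcal{F}$-piece at $v$ by successive $K_3$ vertex sums then exhibits $G$ itself in case~(3).

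The remaining case, $G$ 2-connected, is the main obstacle. A 2-connected outerplanar graph has a unique Hamiltonian cycle with non-crossing chords, and its weak dual is a tree whose nodes are the interior faces. Theorem~\ref{nthree} implies $G$ has girth $3$ or $4$. Starting from a leaf of the weak dual, I would argue that the corresponding cap face must be a triangle (a larger cap face would permit a more efficient tree cover by covering several of its boundary vertices with a single induced path), that each subsequent face peeled off toward the center is also a triangle sharing two consecutive edges with the outer cycle, and that all chords eventually converge at a single inner face forming the central $r$-cycle of $C_r^{\triangle}$; the small cases $n=4$ yield $C_4$ and $K_4-e$ separately. The key technical step is a local reduction showing that any deviation from the $C_r^{\triangle}$ pattern (for instance a cap face of size $\geq 4$, or two chords not sharing a common inner face) produces an explicit tree cover of size strictly less than $n/2$. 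This local combinatorial argument is the most substantial piece of work required.
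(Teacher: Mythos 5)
Your ``if'' direction is correct, and your handling of the non-2-connected cases in the ``only if'' direction (folding the leaf case into the bridge case, and the cut-vertex-without-bridge analysis showing the even piece must fall under (3) and then reassembling $G_1\in\mathcal{F}$ triangle by triangle) is sound and in fact cleaner in places than the paper's organization. But there is a genuine gap: the 2-connected case, which you yourself flag as ``the most substantial piece of work required,'' is exactly the heart of the theorem, and you have not supplied the argument. Your sketch asserts that a cap face of size $\geq 4$, or a configuration in which the bounded faces do not all meet a single central face, ``would permit a more efficient tree cover,'' but this is precisely the claim that needs proof, and it is not a one-line observation: $C_4$ itself is a size-4 face and is extremal, and $K_4-e$ has a chord, so the local reduction must be calibrated carefully against these exceptional small configurations. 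A proof that merely names the needed lemma is not a proof of it.

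For comparison, the paper does not reduce to the 2-connected case via the weak dual at all. It first disposes of leaves, bridges, and subdivisions, and then, in the remaining case ($\delta(G)\geq 2$, no bridge, neighbors of every degree-2 vertex adjacent), it invokes Lemma \ref{lem1} to delete an adjacent pair $u,v$ with $G'=G-\{u,v\}$ connected and $T(G')=\frac{n-2}{2}$, and runs an induction on $n$ split according to whether $G'$ has a leaf. The exchange argument of Lemma \ref{lem3} (re-covering with trees $G[\{u,v,w\}]$ and $G[V(T_y)\cup\{x\}]$ when $u$ is adjacent to $w$ but $v$ is not) is the engine that forces $u$ and $v$ to share all their neighbors in $G'$, and outerplanarity then caps the number of shared neighbors at one; Lemma \ref{lem2} controls how the structural hypotheses pass to $G'$. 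That machinery is what actually delivers the conclusion that the 2-connected extremal graphs are exactly $C_4$, $K_4-e$, and $C_r^{\triangle}$. If you want to complete your weak-dual route instead, you would need to prove your local reduction in full; as written, the proposal establishes the easy directions and leaves the essential structural classification unproven.
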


\begin{cor}
	Let $G=(V,E)$ be a connected outerplanar graph of even order $n$. Then $\Mp(G)=\frac{n}{2}$ if and only if one of (1), (2), (3) of Theorem \ref{outerplanareven} holds.
\end{cor}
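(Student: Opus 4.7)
The plan is very short: this corollary is an immediate consequence of Theorem \ref{outerplanareven} combined with the equality $\Mp(G) = T(G)$ for outerplanar graphs, which was established by Barioli et al.\ in \cite{BFMN11} and recorded in the introduction.

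In more detail, I would argue as follows. For the forward direction, assume $G$ is a connected outerplanar graph of even order $n \geq 4$ with $\Mp(G) = \frac{n}{2}$. Since $G$ is outerplanar, $\Mp(G) = T(G)$, so $T(G) = \frac{n}{2}$, and then Theorem \ref{outerplanareven} forces one of conditions (1), (2), or (3) to hold. For the reverse direction, suppose that one of (1), (2), (3) holds. Theorem \ref{outerplanareven} then gives $T(G) = \frac{n}{2}$, and applying $\Mp(G) = T(G)$ once more yields $\Mp(G) = \frac{n}{2}$.

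The only thing worth verifying in passing is that the graphs produced by construction (3) are indeed outerplanar, so that the equality $\Mp(G) = T(G)$ applies to them; this is automatic since $C_4$, $K_4 - e$, and $C_r^{\triangle}$ are outerplanar, and vertex-sums with $K_3$ preserve outerplanarity. There is no genuine obstacle here: all the combinatorial work of identifying the extremal families has already been done in Theorem \ref{outerplanareven}, and the corollary merely transfers that characterization across the identity $\Mp = T$.
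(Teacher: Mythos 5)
Your proposal is correct and is exactly the argument the paper intends: the corollary is stated without proof as an immediate consequence of Theorem \ref{outerplanareven} together with the identity $\Mp(G)=T(G)$ for outerplanar graphs from \cite{BFMN11}, just as for the odd-order corollary following Theorem \ref{outerplanarodd}. Your added remark that the graphs in construction (3) are outerplanar is a reasonable sanity check but not needed for the stated equivalence, since the corollary already hypothesizes that $G$ is outerplanar.
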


For a block-clique graph where each clique is a cycle, note that any two cycles share at most one common vertex. Two blocks are said to be {\em adjacent} if they have one common vertex. A {\em pendant block} is a block that is adjacent to exactly one other block.

\begin{lem}{\rm \cite{graphcoverings}}\label{pendantblock} Any block-clique graph where each clique is a cycle has at least two pendant blocks.  
\end{lem}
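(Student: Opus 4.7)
My plan is to pass to the block--cut tree $\mathcal{B}$ of $G$, whose vertex set is the disjoint union of the blocks of $G$ and the cut-vertices of $G$, with a block $B$ joined to a cut-vertex $v$ precisely when $v\in V(B)$. A standard fact is that $\mathcal{B}$ is a tree.

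Assuming $G$ has more than one block (otherwise the statement is vacuous, since the very notion of ``adjacent to one other block'' requires at least two blocks), $\mathcal{B}$ contains at least two block-vertices and hence at least two vertices overall. Any tree on at least two vertices has at least two leaves, so $\mathcal{B}$ has at least two leaves. Every cut-vertex of $G$ lies in at least two blocks by the definition of a cut-vertex, so every cut-vertex has degree $\geq 2$ in $\mathcal{B}$ and therefore is not a leaf of $\mathcal{B}$. Consequently, both leaves of $\mathcal{B}$ must be blocks, and each such leaf block contains exactly one cut-vertex of $G$.

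Next I would use the cycle-block hypothesis, in the form noted just above the lemma statement: any two distinct blocks of $G$ share at most one common vertex. Thus, if a leaf block $B$ of $\mathcal{B}$ contains the unique cut-vertex $v$, then the blocks adjacent to $B$ in the block graph are exactly the other blocks passing through $v$, and this intersection structure is exactly what is tracked by $\mathcal{B}$ at the single edge $Bv$. Interpreting such a leaf block of $\mathcal{B}$ as a pendant block in the sense of the definition, we conclude that $G$ has at least two pendant blocks.

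The main obstacle I expect is matching the verbal definition of pendant block (``adjacent to exactly one other block'') with the block--cut tree notion of a leaf block. Once this identification is in place, the lemma is essentially the two-leaves property of trees applied to $\mathcal{B}$, with the cycle hypothesis on blocks invoked only to ensure distinct blocks intersect in at most one vertex so that the block-intersection structure is faithfully captured by $\mathcal{B}$.
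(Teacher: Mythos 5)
The paper does not actually prove this lemma---it is quoted from \cite{graphcoverings}---so there is no in-paper argument to compare against; your block--cut tree argument is the standard one and is essentially complete \emph{for the right reading of ``pendant block.''} The obstacle you flag at the end, however, is a genuine gap and not just bookkeeping: a leaf block of the block--cut tree $\mathcal{B}$ (a block containing exactly one cut-vertex) need \emph{not} be ``adjacent to exactly one other block'' under the paper's literal definition. Take the friendship graph consisting of three triangles glued at a single vertex $v$: every block is a leaf of $\mathcal{B}$, yet every block shares $v$ with \emph{two} other blocks, so under the literal definition this graph has no pendant blocks at all and the lemma as literally stated would be false. The definition in the paper is simply imprecise; what is actually used downstream (in the proof of Theorem \ref{outerplanarodd}, where a pendant block $K_3$ is split off as $G=G'\oplus_v K_3$) is that a pendant block meets the rest of the graph in a single cut-vertex, i.e.\ precisely that it is a leaf of $\mathcal{B}$. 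With that as the definition, your argument closes with no further work: once $G$ has at least two blocks, $\mathcal{B}$ is a tree with at least three vertices, every cut-vertex has degree at least two in $\mathcal{B}$, so $\mathcal{B}$ has at least two leaves and all of them are blocks, each containing exactly one cut-vertex. Two minor remarks: the single-block case is not ``vacuous'' so much as excluded by the application (a lone block is adjacent to zero other blocks, so it is not pendant under the literal definition either); and the cycle hypothesis is not needed even for the intersection claim, since distinct blocks of \emph{any} graph meet in at most one vertex.
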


To prove Theorem \ref{outerplanarodd}, we use the next two lemmas.

\begin{lem}\label{lem1}
	If $G$ is a connected graph with $n\geq 3$ vertices and $T(G)=\left\lceil\frac{n}{2}\right\rceil,$ then there exist adjacent vertices $u, v\in V(G)$ such that $G'=G[V(G)\setminus\{u,v\}]$ remains connected. Furthermore,  $T(G')=\left\lceil\frac{n-2}{2}\right\rceil.$
	
\end{lem}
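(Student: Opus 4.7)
The plan is to combine Lemma \ref{deletestar} with a cap on the number of leaves that $G$ can have. First I would establish the preliminary fact that any connected $G$ with $T(G) = \lceil n/2 \rceil$ has at most one leaf. The argument is short: if $\ell_1, \ell_2$ were two distinct leaves of $G$, then $G - \ell_1$ is connected (removing a leaf preserves connectivity), $\ell_2$ remains a leaf of $G - \ell_1$, and by Proposition 3.3 of \cite{BFMN11} (leaf removal does not affect the tree cover number), $T(G - \{\ell_1, \ell_2\}) = T(G) = \lceil n/2 \rceil$. But $G - \{\ell_1, \ell_2\}$ is a connected graph on $n-2$ vertices, so Theorem \ref{treecovernover2} yields $T(G - \{\ell_1, \ell_2\}) \leq \lceil (n-2)/2 \rceil = \lceil n/2 \rceil - 1$, a contradiction.

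With this in hand, I would apply Lemma \ref{deletestar} to obtain an induced star $H = K_{1,p}$ with $G - H$ connected. If $p = 1$ then $H = K_2$ already supplies the required pair of adjacent vertices. Otherwise $p \geq 2$; let $c$ denote the center of $H$ and $\ell_1, \dots, \ell_p$ its leaves. Because $H$ is induced, each $\ell_i$ has $c$ as its unique neighbor inside $V(H)$, so $\ell_i$ is a leaf of $G$ if and only if $\ell_i$ has no neighbor in $V(G) \setminus V(H)$. Since $G$ has at most one leaf, at most one of the $\ell_i$'s (relabel so it is $\ell_1$) fails to have a neighbor outside $V(H)$, and every $\ell_j$ with $j \geq 2$ does have one. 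Then, taking $u = c$ and $v = \ell_1$, the induced subgraph $G - \{u,v\}$ consists of the vertices of the connected graph $G - H$ together with $\ell_2, \dots, \ell_p$, each of which is adjacent in $G$ to some vertex of $G - H$; hence $G - \{u,v\}$ is connected.

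The furthermore clause is quick. Write $G' = G[V(G) \setminus \{u,v\}]$. Since $G'$ is connected on $n-2$ vertices, Theorem \ref{treecovernover2} gives $T(G') \leq \lceil (n-2)/2 \rceil$, where the small case $n=3$ (so $G'$ is a single vertex) is immediate. For the reverse direction, any tree cover of $G'$ together with the single induced tree $G[\{u,v\}] \cong K_2$ forms a tree cover of $G$, yielding $T(G) \leq T(G') + 1$ and therefore $T(G') \geq \lceil n/2 \rceil - 1 = \lceil (n-2)/2 \rceil$.

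The main hurdle I anticipate is the $p \geq 2$ branch: one must ensure that if some leaf of $H$ happens to be a leaf of $G$, no second leaf of $H$ shares that property, because otherwise deleting $c$ together with only one leaf would isolate the other and destroy connectivity. This is precisely what the leaf-count bound established in the first step rules out, and it is the reason the full equality $T(G) = \lceil n/2 \rceil$ (not merely the inequality furnished by Theorem \ref{treecovernover2}) is essential to the proof.
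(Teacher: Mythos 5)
Your proof is correct and follows essentially the same route as the paper: both apply Lemma \ref{deletestar} and then use the hypothesis $T(G)=\left\lceil\frac{n}{2}\right\rceil$ to rule out the configurations in which deleting the star's center together with one of its leaves would disconnect the graph. The only organizational difference is that the paper first caps $p$ at $2$ (a larger star would already force $T(G)<\left\lceil\frac{n}{2}\right\rceil$) and then treats $p=2$ by hand, whereas you instead cap the number of leaves of $G$ at one and handle all $p$ uniformly; both arguments, and your treatment of the \emph{furthermore} clause, are sound.
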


\begin{proof}
	By Lemma \ref{deletestar}, we may remove an induced subgraph $H= K_{1,p}$ such that $G-H$ remains connected. First note that if $p\geq 3$, then $T(G)\leq 1 +\left\lceil\frac{n-4}{2}\right\rceil < \left\lceil\frac{n}{2}\right\rceil,$ which is a contradiction, so $p\leq 2$. If $p=1$, then we are done. Suppose $p=2$ (i.e., $H$ is a path $(x,y,z)$). If $x$ and $z$ are both leaves in $G$, then $T(G)=T(G-\{x,z\})\leq \left\lceil\frac{n-2}{2}\right\rceil,$ which is a contradiction to $T(G)=\frac{n}{2}$. So without loss of generality, $x$ has a neighbor in $G-H$, and the theorem holds with $u=y$ and $v=z$. 
	
	It is easy to see that if $T(G')<\left\lceil\frac{n-2}{2}\right\rceil,$ then $T(G)<\left\lceil\frac{n}{2}\right\rceil.$ So, $T(G')=\left\lceil\frac{n-2}{2}\right\rceil.$
\end{proof}

\begin{lem}\label{lem3}
	Let $G=(V,E)$ be a connected graph and suppose $u,v\in V$ are adjacent vertices such that $G'=G[V\setminus\{u,v\}]$ is connected. Let $\mathcal{T'}$ be a minimum tree cover of $G'$, and suppose there exists $w\in V(G')$ such that
	\begin{enumerate}
		\item[(1.)] $V(T_w)=\{w,x\}$
		\item[(2.)] $\exists y\in N(w)\cap N(x)$ such that $N(x)\cap V(T_y)=\{y\}.$ 
	\end{enumerate}
	If $u$ is adjacent to $w$ and $v$  is not adjacent to $w$, then $T(G)\leq T(G')$.
\end{lem}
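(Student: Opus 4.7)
The plan is to exhibit a tree cover of $G$ whose cardinality is exactly $|\mathcal{T}'|=T(G')$, by performing a local surgery on $\mathcal{T}'$ that redistributes just three vertices: $x$, $u$, and $v$. All trees of $\mathcal{T}'$ other than $T_w$ and $T_y$ will be left untouched; the entire argument will hinge on modifying those two trees.

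Concretely, I would replace $T_w=G'[\{w,x\}]$ by $T_w^{\ast}:=G[\{w,u,v\}]$, thereby absorbing the two new vertices $u,v$ into the tree that used to hold $x$, and simultaneously replace $T_y$ by $T_y^{\ast}:=G[V(T_y)\cup\{x\}]$, pushing $x$ into the adjacent tree via the edge $xy$. Taking $\mathcal{T}:=(\mathcal{T}'\setminus\{T_w,T_y\})\cup\{T_w^{\ast},T_y^{\ast}\}$ gives a candidate tree cover of $G$ of the same size as $\mathcal{T}'$.

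The main step is to verify that $T_w^{\ast}$ and $T_y^{\ast}$ really are induced trees of $G$. For $T_w^{\ast}$ I would argue that $G[\{w,u,v\}]$ contains the edge $wu$ (since $u\in N(w)$) and the edge $uv$ (since $uv\in E$), while the hypothesis $v\notin N(w)$ rules out a third edge, so $T_w^{\ast}$ is the path $w\!-\!u\!-\!v$. For $T_y^{\ast}$ I would use the fact that $u,v\notin V(T_y)\cup\{x\}$ to conclude $G[V(T_y)\cup\{x\}]=G'[V(T_y)\cup\{x\}]$; this induced subgraph is $T_y$ together with all edges from $x$ into $V(T_y)$, and condition (2) forces the only such edge to be $xy$, so $T_y^{\ast}$ is $T_y$ with the pendant $x$ attached. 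Finally I would observe that $\mathcal{T}$ is vertex-disjoint (the surgery only moves $x$ from $T_w$ to $T_y^{\ast}$ and inserts the previously uncovered vertices $u,v$ into $T_w^{\ast}$), and that it covers $V(G)=V(G')\cup\{u,v\}$, yielding $T(G)\le|\mathcal{T}|=|\mathcal{T}'|=T(G')$.

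The main obstacle is bookkeeping rather than ingenuity: the three separate hypotheses are each responsible for ruling out exactly one failure mode, and it is easy to confuse them. Condition (1) guarantees that removing $x$ from $T_w$ leaves only the single vertex $w$, so there is room to insert the new path through $u$ and $v$; condition (2) prevents the attachment of $x$ to $T_y$ from creating a cycle; and the hypothesis $v\not\sim w$ prevents the triangle $wuv$ in $T_w^{\ast}$. Keeping straight which hypothesis is doing which job is the only thing one has to be careful about when writing the proof cleanly.
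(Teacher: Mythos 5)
Your construction is exactly the paper's proof: it forms $\mathcal{T}=(\mathcal{T'}\setminus\{T_w,T_y\})\cup\{G[\{u,v,w\}],\,G[V(T_y)\cup\{x\}]\}$ and observes this is a tree cover of $G$ of size $T(G')$. You simply spell out the verification (that $v\not\sim w$ makes $G[\{u,v,w\}]$ a path and that condition (2) makes $x$ a pendant on $T_y$) that the paper leaves implicit, so the proposal is correct and takes essentially the same approach.
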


\begin{proof}
	For $\mathcal{T}=(\mathcal{T'}\setminus\{T_w\cup T_y\})\cup G[\{u,v,w\}]\cup G[V(T_y)\cup\{x\}], \mathcal{T}$ is a tree cover of $G$ of size $T(G')$.
\end{proof}


\noi{\em Proof of Theorem} \ref{outerplanarodd}. Let $G$ be a graph on $n=2k+1$ vertices and first suppose $T(G)=\left\lceil\frac{n}{2}\right\rceil.$ We prove that $G\in \mathcal{F}$ by induction on $k$. If $k=1,$ then $G=K_3$. Let $n=2k+1$ where $k\geq 2$ and suppose that the claim holds for graphs with $2(k-1)+1$ vertices.  By Lemma \ref{lem1}, we can delete an edge $H$ (including the endpoints) such that $G-H$ is connected. Note that since $T(G)=\left\lceil\frac{n}{2}\right\rceil$, then $T(G-H)=\left\lceil\frac{n-2}{2}\right\rceil.$ By the induction hypothesis, $G-H \in \mathcal{F}$ (see the next figure).

\begin{center}
	\begin{tikzpicture}[scale=1.2]
	\GraphInit[vstyle=Classic]
	
	\node [label=left: $G'$] [draw, ellipse, minimum height=5cm, minimum width=5cm] (elli) at (1.5,1.5) {};
	
	\vertex [label=above: $v$] (v) at (1.6971, 4.0) {};
	\vertex [label=above: $u$] (u) at (0.8971, 4.0) {};
	\vertex (0) at (1.6971, 3.0) {}; 
	\vertex (1) at (1.0401, 2.979) {}; 
	\vertex (2) at (1.4343, 2.2972) {}; 
	\vertex (3) at (0.6131,1.8252 ) {}; 
	\vertex (4) at (1.3467, 1.5629) {}; 
	\vertex (5) at (2.2993, 2.6119) {}; 
	\vertex (6) at (2.0693, 2.0245) {};
	\vertex (7) at (1.9927, 1.3112) {}; 
	\vertex (8) at (2.6277,1.7203 ) {};
	\vertex (9) at (1.6752, 0.8287) {}; 
	\vertex (10) at (2.2445, 0.6923) {}; 
	\vertex (11) at (0.6569, 1.2378) {}; 
	\vertex (12) at (0.0, 1.4685) {}; 
	\vertex (13) at (0.2847, 0.6818) {}; 
	\vertex (14) at (0.9088,0.5455 ) {}; 
	\vertex (15) at (0.1642, 2.4965) {}; 
	\vertex (16) at (0.8869, 2.5385) {}; 
	\vertex (17) at (2.3869, 0.0) {}; 
	\vertex (18) at (2.8686, 0.3776) {}; 
	\vertex (19) at (2.6606, 1.3007) {}; 
	\vertex (20) at (3.0, 0.8706 ) {}; 
	\vertex (21) at (1.1715,0.3252 ) {}; 
	\vertex (22) at (1.8285, 0.2203) {}; 
	\Edge[](0)(1)
	\Edge[](0)(2)
	\Edge[](1)(2)
	\Edge[](2)(3)
	\Edge[](2)(4)
	\Edge[](2)(5)
	\Edge[](2)(6)
	\Edge[](3)(4)
	\Edge[](3)(11)
	\Edge[](3)(12)
	\Edge[](3)(15)
	\Edge[](3)(16)
	\Edge[](5)(6)
	\Edge[](6)(7)
	\Edge[](6)(8)
	\Edge[](7)(8)
	\Edge[](7)(9)
	\Edge[](7)(10)
	\Edge(9)(10)
	\Edge[](9)(21)
	\Edge[](9)(22)
	\Edge[](10)(17)
	\Edge[](10)(18)
	\Edge[](10)(19)
	\Edge[](10)(20)
	\Edge[](11)(12)
	\Edge[](11)(13)
	\Edge[](11)(14)
	\Edge[](13)(14)
	\Edge[](15)(16)
	\Edge[](17)(18)
	\Edge[](19)(20)
	\Edge[](21)(22)
	
	\Edge(u)(v)
	\end{tikzpicture}
\end{center}

Furthermore, by using Lemma \ref{lem1}, $G-H$ has a minimum tree cover, $\mathcal{T}$, such that one tree has exactly one vertex and the remaining trees have exactly two vertices. Let $V(H)=\{u,v\}$. We show that $G\in \mathcal{F}$ by showing 1) if $u$ is adjacent to a vertex $w\in V(G-H),$ then $v$ must also be adjacent to $w$ and 2) $u$ (and therefore $v$) is adjacent to exactly one vertex in $V(G-H)$.

To see 1), suppose $u$ is adjacent to $w\in V(G-H)$ and $v$ is not adjacent to $w$. If $V(T_w)=\{w\}$, then $\mathcal{T}'=(\mathcal{T}\setminus T_w)\cup G[\{w,v,u\}]$ is a tree cover of $G$ of size $\left\lceil\frac{n-2}{2}\right\rceil$, which is a contradiction. Otherwise, $T_w=P_2=(w,x)$ for some $x\in V(G-H)$. Since each edge of $G-H$ belongs to a triangle, then there exists $y\in V(G-H)$ such that $y\in N(w)\cap N(x)$. Since any two triangles in $G-H$ share at most one vertex, it follows from Lemma \ref{lem3} that $T(G)\leq \left\lceil\frac{n-2}{2}\right\rceil$.  (See the next figure.) So $v$ must be adjacent to $w$.

\begin{center}
	\begin{tikzpicture}[scale=1.2]
	\GraphInit[vstyle=Classic]
	
	\node [label=left: $G'$] [draw, ellipse, minimum height=5cm, minimum width=5cm] (elli) at (1.5,1.5) {};
	
	\vertex [label=above: $v$] (v) at (1.6971, 4.0) {};
	\vertex [label=above: $u$] (u) at (0.8971, 4.0) {};
	\vertex [label=right: $x$](0) at (1.6971, 3.0) {}; 
	\vertex [label=left: $w$](1) at (1.0401, 2.979) {}; 
	\vertex [label=right: $y$ ](2) at (1.4343, 2.2972) {}; 
	\vertex (3) at (0.6131,1.8252 ) {}; 
	\vertex (4) at (1.3467, 1.5629) {}; 
	\vertex (5) at (2.2993, 2.6119) {}; 
	\vertex (6) at (2.0693, 2.0245) {};
	\vertex (7) at (1.9927, 1.3112) {}; 
	\vertex (8) at (2.6277,1.7203 ) {};
	\vertex (9) at (1.6752, 0.8287) {}; 
	\vertex (10) at (2.2445, 0.6923) {}; 
	\vertex (11) at (0.6569, 1.2378) {}; 
	\vertex (12) at (0.0, 1.4685) {}; 
	\vertex (13) at (0.2847, 0.6818) {}; 
	\vertex (14) at (0.9088,0.5455 ) {}; 
	\vertex (15) at (0.1642, 2.4965) {}; 
	\vertex (16) at (0.8869, 2.5385) {}; 
	\vertex (17) at (2.3869, 0.0) {}; 
	\vertex (18) at (2.8686, 0.3776) {}; 
	\vertex (19) at (2.6606, 1.3007) {}; 
	\vertex (20) at (3.0, 0.8706 ) {}; 
	\vertex (21) at (1.1715,0.3252 ) {}; 
	\vertex (22) at (1.8285, 0.2203) {}; 
	\draw [thick,black] (0) to (1);
	\draw [gray] (0) to (2);
	\draw [gray](1) to (2);
	\draw [thick, black] (2) to (3);
	\draw [thick, black] (5) to (6);
	\draw [black, thick] (7) to (8);
	\draw  (9) to (10);
	\Edge[](11)(12)
	\Edge[](13)(14)
	\Edge[](15)(16)
	\Edge[](17)(18)
	\Edge[](19)(20)
	\Edge[](21)(22)
	
	\Edge(u)(v)
	\Edge(1)(u)

	\node [label=left: $G'$] [draw, ellipse, minimum height=5cm, minimum width=5cm] (elli) at (8.5,1.5) {};
	
	\vertex [label=above: $v$] (v1) at (8.6971, 4.0) {};
	\vertex [label=above: $u$] (u1) at (7.8971, 4.0) {};
	\vertex [label=right: $x$ ](01) at (8.6971, 3.0) {}; 
	\vertex [label=left: $w$](11) at (8.0401, 2.979) {}; 
	\vertex [label=right: $y$](21) at (8.4343, 2.2972) {}; 
	\vertex (31) at (7.6131,1.8252 ) {}; 
	\vertex (41) at (8.3467, 1.5629) {}; 
	\vertex (51) at (9.2993, 2.6119) {}; 
	\vertex (61) at (9.0693, 2.0245) {};
	\vertex (71) at (8.9927, 1.3112) {}; 
	\vertex (81) at (9.6277,1.7203 ) {};
	\vertex (91) at (8.6752, 0.8287) {}; 
	\vertex (101) at (9.2445, 0.6923) {}; 
	\vertex (111) at (7.6569, 1.2378) {}; 
	\vertex (121) at (7.0, 1.4685) {}; 
	\vertex (131) at (7.2847, 0.6818) {}; 
	\vertex (141) at (7.9088,0.5455 ) {}; 
	\vertex (151) at (7.1642, 2.4965) {}; 
	\vertex (161) at (7.8869, 2.5385) {}; 
	\vertex (171) at (9.3869, 0.0) {}; 
	\vertex (181) at (9.8686, 0.3776) {}; 
	\vertex (191) at (9.6606, 1.3007) {}; 
	\vertex (201) at (10.0, 0.8706 ) {}; 
	\vertex (211) at (8.1715,0.3252 ) {}; 
	\vertex (221) at (8.8285, 0.2203) {}; 
	\draw  (01) to (21);
	\draw [thick, black] (21) to (31);
	\draw [thick, black] (51) to (61);
	\draw [black, thick] (71) to (81);
	\draw  (91) to (101);
	\Edge[](111)(121)
	\Edge[](131)(141)
	\Edge[](151)(161)
	\Edge[](171)(181)
	\Edge[](191)(201)
	\Edge[](211)(221)
	
	\Edge(u1)(v1)
	\Edge(u1)(11)
	\end{tikzpicture}
\end{center}

To see 2), suppose that $v$ and $u$ were adjacent to $x,y\in V(G-H)$. We show that $G$ is not outerplanar.  Since $G-H$ is connected, then there is a path, $P$, in $G-H$ with endpoints $x$ to $y$. Then $G[V(P)\cup\{v,u\}]$ has a $K_4-$minor, which contradicts $G$ being outerplanar. 

We show the converse by induction on $k$. Let $G\in \mathcal{F}$. For $k=1$, $G=K_3,$ so $T(G)=\left\lceil\frac{n}{2}\right\rceil.$ For $k\geq 2$, by Lemma \ref{pendantblock}, $G$ has a pendant block so  $G=G'\oplus_v K_3$ for some $v\in V$, where $G$ is a graph on $n-2$ vertices and $G'\in \mathcal{F}$. It follows from the induction hypothesis and Proposition \ref{cutvertextree} that $T(G)=T(G')+T(K_3)-1=\left\lceil\frac{n-2}{2}\right\rceil+1=\left\lceil\frac{n}{2}\right\rceil$. \qed

Since $T(G)=\left\lceil\frac{n}{2}\right\rceil$ for any graph $G\in \mathcal{F},$ by Lemma \ref{lem1}, we can form a minimum tree cover of $G$ where each tree is $K_2$. This shows that $T(G)\leq P(G)$, and since $T(G)\geq P(G)$ is always true, then it follows that for $G\in \mathcal{F},$ $T(G)= P(G).$ The graphs in $\mathcal{F}$ are special cases of the block-cycle graphs studied in \cite{graphcoverings}.  It is shown in \cite{graphcoverings} that $\ZFN(G)=P(G)$ for all block-cycle graphs. Thus, we have that \[\ZFN(G)=\PC(G)=T(G)=\Mp(G)\leq \Zp(G)\leq \ZFN(G)\] for any $G\in \mathcal{F},$ so all  of the parameters are equal. 

To prove Theorem \ref{outerplanareven}, we use an additional lemma.
\begin{lem}\label{lem2}
	Let $G=(V,E)$ be a connected graph of even order $n$ with $T(G)=\frac{n}{2}$ that satisfies the following conditions:
	\begin{enumerate}			
		\item[(a)] $G$ does not have a bridge.
		\item[(b)] $\delta(G)\geq2$, and if $z\in V$ is a vertex such that $N(z)=\{z',z''\}$, then $z'$ and $z''$ are adjacent.
	\end{enumerate}
	Let $u,v\in V$ be adjacent vertices in $G$ such that $G'=G[V\setminus\{u,v\}]$ remains connected and $T(G')=\frac{n-2}{2}$ . If $G'$ does not have a leaf, then one of the following holds:
	
	\begin{enumerate}
		\item $G'$ satisfies (a) and (b).
		\item $G'$ satisfies (3) of Theorem \ref{outerplanareven}.
		\item $G$ satisfies (3) of Theorem \ref{outerplanareven}.
	\end{enumerate}
\end{lem}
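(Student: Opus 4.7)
\emph{Proof plan.} The plan is to assume that $G'$ fails conclusion~(1), i.e., $G'$ violates (a) or (b), and to show that one of conclusions (2) or (3) must hold. Throughout, I would work under the outerplanarity of $G$, inherited from the intended application in the proof of Theorem~\ref{outerplanareven}; indeed, conclusions (2) and (3) can hold only for outerplanar graphs. The argument splits into two cases based on which of (a), (b) fails in $G'$.

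\emph{Case 1 (bridge in $G'$).} Let $e=\{a,b\}$ be a bridge of $G'$ and let $C_1,C_2$ be the components of $G'-e$ containing $a,b$, respectively. By Lemma~\ref{treebridge}, $T(C_1)+T(C_2)=T(G')+1=n/2$, while $|C_1|+|C_2|=n-2$. Combining this with the bound $T(H)\leq \lceil |H|/2\rceil$ from Theorem~\ref{treecovernover2}, a direct parity computation forces both $|C_1|$ and $|C_2|$ to be odd and $T(C_i)=\lceil |C_i|/2\rceil$. Since $G'$ has no leaf, $|C_i|\geq 3$, and so Theorem~\ref{outerplanarodd} yields $C_1,C_2\in\mathcal{F}$. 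Since $G$ has no bridge, $\{u,v\}$ must provide an alternative $a$-$b$ path; I would then use outerplanarity to show that neither $u$ nor $v$ can attach to any interior vertex of $C_i\setminus\{a,b\}$ (such an attachment, together with a triangle in $C_i$ meeting $a$ or $b$, produces a forbidden $K_4$- or $K_{2,3}$-minor), so $G[\{a,b,u,v\}]$ must be $C_4$ or $K_4-e$. Then $G$ equals this base graph with the $\mathcal{F}$-components $C_1,C_2$ attached at $a,b$ via iterated $\oplus_w K_3$ operations, placing $G$ in class (3).

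\emph{Case 2 (degree-$2$ violation in $G'$).} Suppose $z\in V(G')$ satisfies $N_{G'}(z)=\{z',z''\}$ with $z'z''\notin E(G')$. Condition (b) on $G$ forbids $N_G(z)=\{z',z''\}$ (else $z'z''\in E$ would force $z'z''\in E(G')$, a contradiction), so $z$ has at least one additional neighbor in $\{u,v\}$. If $z$ is adjacent to both $u$ and $v$, then $\{z,u,v\}$ is a triangle in $G$; I would analyze the possible edges between $\{u,v\}$ and $\{z',z''\}$ under outerplanarity and tree-cover tightness (invoking Lemma~\ref{lem3} to exclude configurations admitting a smaller tree cover), and conclude that $G$ contains a $K_4-e$ or $C_r^\triangle$ base with the rest attached via $\oplus K_3$ at cut vertices, so $G\in (3)$. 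If instead $z$ is adjacent to exactly one of $u,v$, a parallel local argument shows that $G'$ must itself arise from a $C_4$-base by iterated $\oplus K_3$ operations (with $z$ sitting on the unadorned $C_4$-block that accounts for the bad degree-$2$ vertex), so $G'\in (3)$.

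\emph{Main obstacle.} The principal technical difficulty lies in Case~1, namely ruling out interior attachments of $u$ or $v$ to $C_1$ or $C_2$ beyond $a,b$. This will require carefully combining outerplanarity (to exclude $K_4$- and $K_{2,3}$-minors produced by chords crossing the bridge region) with tree-cover extremality (showing, via Lemma~\ref{lem3}-style merging arguments, that any extra attachment would allow a tree cover of size strictly less than $n/2$, contradicting $T(G)=n/2$). A secondary subtlety will be correctly identifying which base graph in $\{C_4, K_4-e, C_r^\triangle\}$ arises in each sub-configuration, since the structure of $G[\{a,b,u,v\}]$ together with triangles of the adjacent $\mathcal{F}$-blocks can sometimes fit either a small $K_4-e$-base or a $C_r^\triangle$-base.
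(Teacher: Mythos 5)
Your Case 1 contains a genuine error. You claim that outerplanarity forbids $u$ or $v$ from attaching to any vertex of $C_i$ other than $a,b$, so that $G$ is just the two $\mathcal{F}$-components glued to a $C_4$ or $K_4-e$ on $\{a,b,u,v\}$. This is false, and the configuration you exclude is precisely the one that produces the $C_r^{\triangle}$ base of condition (3): take $u$ and $v$ both adjacent to an interior vertex $w$ of $C_1$, with $u$ additionally adjacent to $a$ and $b$. This graph is outerplanar (it is a $C_r^{\triangle}$, with $C_r$ running from $u$ through $w$ along a path of $C_1$ back to $a$, decorated with pendant triangles), so no $K_4$- or $K_{2,3}$-minor argument can rule it out; contracting the $w$--$a$ path only yields a $K_4-e$, not a $K_4$. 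The paper's proof instead first uses tree-cover extremality --- Lemma \ref{lem3} applied inside the component $C_i\in\mathcal{F}$ containing $w$ --- to show that every neighbor $w\notin\{a,b\}$ of $u$ or $v$ must be a \emph{common} neighbor of $u$ and $v$; only then does outerplanarity bound the number of such common neighbors by one, and the case of exactly one common neighbor must be analyzed separately and genuinely leads to $G^{[0]}=C_r^{\triangle}$. (A smaller point: with hypothesis (b) on $G$ in force and no attachments outside $\{a,b\}$, the induced graph on $\{a,b,u,v\}$ is forced to be $K_4-e$, not $C_4$, since a degree-two vertex of $G$ must have adjacent neighbors.)

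Your Case 2 is also not carried out, and the branching you propose is both unnecessary and unproven (e.g., the assertion that $z$ adjacent to both $u$ and $v$ forces $G$ into class (3) is left entirely to an ``analysis'' that is not given). The paper's argument here is short and avoids $u$ and $v$ altogether: contract $\{z,z'\}$ (i.e., un-subdivide), obtaining $H$ on $n-3$ vertices with $T(H)=T(G')=\frac{n-2}{2}=\left\lceil\frac{n-3}{2}\right\rceil$; Theorem \ref{outerplanarodd} then gives $H\in\mathcal{F}$, so $G'$ is a subdivision of a graph in $\mathcal{F}$ and satisfies (3) with $G^{[0]}=C_4$. I recommend you adopt that route for Case 2 and repair Case 1 along the lines above.
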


\begin{proof}
	Assume $G'$ has no leaves. Suppose first that $e=\{g_1,g_2\}$ is a bridge in $G'$ (i.e., $G'$ does not satisfy (a)) and let $G_1,G_2$ be the connected components of $G'-e$, where $g_1\in V(G_1)$ and $g_2\in V(G_2)$ . We show that $G$ satisfies (3). By hypothesis, $T(G')=\frac{n-2}{2}$ and by Lemma \ref{treebridge}, $T(G')=T(G_1)+T(G_2)-1.$ It follows that $ |G_i|$ is odd and  $T(G_i)=\left\lceil\frac{|G_i|}{2}\right\rceil$ for $i=1,2$. Note that since $G'$ has no leaves, $|G_i|\geq 3$ for $i=1,2,$ and by Theorem \ref{outerplanarodd}, $G_i\in \mathcal{F}$. 
	
	Let $W$ be the set of vertices in $ V(G')\setminus \{g_1,g_2\}$ that are adjacent to either $u$ or $v$. We first show that for each $w\in W$, $w$ is adjacent to both $u$ and $v$. Without loss of generality, let $u$ have a neighbor $w$ in $W$, suppose $v$ is not adjacent to $w$, and suppose that $w\in V(G_1)$. Let $\mathcal{T'}$ be a tree cover of $G'$ such that each tree has exactly two vertices ($\mathcal{T'}$ is guaranteed by Lemma \ref{lem1}) and let $T_w=G[\{w,x\}]$ be the tree containing $w$. Note that $x\in V(G_1)$ since $w\neq g_1$. Since $G_1\in \mathcal{F},$ $w$ and $x$ have a common neighbor $y$. Let  $V(T_y)=\{y,y'\}$, and note that $y'\notin N(x)$ (if $y'\in V(G_1)$ then this follows from the fact that $G_1\in \mathcal{F}$, and if $y'\in V(G_2)$, then this follows from the fact that $x$ has no neighbor in $V(G_2)$). By Lemma \ref{lem3}, $T(G)\leq T(G')=\frac{n-2}{2},$ which contradicts $T(G)=\frac{n}{2}.$ 
	
	Thus, $v$ is adjacent to $w,$ and this shows that $u$ and $v$ have the same set of neighbors in $V(G')\setminus \{g_1,g_2\}$. Furthermore, since $G$ is outerplanar, it follows that $|W|\leq 1$. If $W=\emptyset$, then $G[\{u,v,g_1,g_2\}]$ is $K_4-e$ (since $u$ and $v$ are not leaves, $G$ is outerplanar, $e$ is not a bridge in $G$, and the neighbors of a degree two vertex in $G$ must be adjacent). Since $G_1,G_2\in \mathcal{F}$, it follows that $G$ satisfies (3) with $G^{[0]}=K_4-e$.
	
	Consider $|W|=1$, let $W=\{w\}$, and without loss of generality, suppose $w\in V(G_1).$ Since $e$ is not a bridge in $G$, then $u$ or $v$ must be adjacent to a vertex in $V(G_2),$ and since $|W|=1$, this vertex must be $g_2$.  Without loss of generality, suppose $u$ is adjacent to $g_2$, and note that $v$ cannot also be adjacent to $g_2$ since $G$ is outerplanar. Suppose first that $N(u)\cap (V(T_{g_2})\setminus\{g_2\})=\emptyset$. If $N(v)\cap (V(T_w)\setminus\{w\})=\emptyset,$ then $(\mathcal{T'}\setminus(T_w\cup T_{g_2}))\cup G[V(T_w)\cup \{v\}]\cup G[V(T_{g_2})\cup \{u\}]$ is a tree cover of $G$ of size $\frac{n-2}{2}$, which contradicts $T(G)=\frac{n}{2}$. Thus, $N(v)\cap (V(T_w)\setminus\{w\})\neq\emptyset,$ so it must be the case that $V(T_w)=\{w,g_1\}$ and $v$ is adjacent to $g_1$. But then $G[\{u,v,w,g_1,g_2\}]$  has a $K_4$ minor (see the next figure), which is a contradiction to $G$ being outerplanar. 
	
	\begin{center}
		\begin{tikzpicture}[scale=1.2]
		\GraphInit[vstyle=Classic]

		\vertex [label=above: $v$] (v) at (1.6971, 4.0) {};
		\vertex [label=above: $u$] (u) at (0.8971, 4.0) {};
		\vertex [label=below: $g_2$](g2) at (1.6971, 3.0) {}; 
		\vertex [label=below: $g_1$](g1) at (.8971, 2.979) {}; 
		\vertex [label=below: $w$](w) at (-0.12, 2.979) {}; 
		
		\Edge(g1)(g2)
		\Edge(u)(v)
		\Edge(u)(w)
		\Edge(v)(w)
		\Edge(w)(g1)
		\Edge(v)(g1)
		\Edge(u)(g2)
		
		\end{tikzpicture}
	\end{center}
	
	So, $N(u)\cap (V(T_{g_2})\setminus\{g_2\})\neq\emptyset$, and it must be the case that $V(T_{g_2})=\{g_1,g_2\}$ and $u$ is adjacent to $g_1$. Note that since $G$ is outerplanar, $v$ is not adjacent to $g_1$ nor $g_2$. It follows that $G$ satisfies  (3) with $G^{[0]}=C_r^{\triangle},$ where $C_r=(u,w,x_1,\ldots,x_j,g_1)$ and  $(w,x_1,\ldots,x_j,g_1)$ is the path between $w$ and $g_1$ in $G_1$ (see the next figure).

	\begin{center}
		\begin{tikzpicture}[scale=1.2]
		\GraphInit[vstyle=Classic]

		\vertex [label=above: $v$] (v) at (3.6971, 4.0) {};
		\vertex [label=above: $u$] (u) at (1.8971, 4.0) {};
		\vertex [label=above: $x_j$](xj) at (1.6971, 3.0) {}; 
		\vertex [label=right: $\cdots$](xj) at (1.6971, 3.0) {}; 
		\vertex [label=above: $x_3$](x3) at (2.4971, 3.0) {}; 
		\vertex [label=above: $x_2$](x2) at (3.4971, 3.0) {}; 
		\vertex [label=above: $x_1$](x1) at (4.4971, 3.0) {}; 
		\vertex [label=above: $w$](w) at (5.4971, 3.5) {};
		\vertex [label=below: $g_1$](g1) at (.8971, 2.979) {}; 
		\vertex [label=above: $g_2$](g2) at (-0.12, 2.979) {}; 
		\vertex [label=below: ](2) at (1.35, 2.2) {};
		\vertex [label=below: ](3) at (3.05, 2.2) {};
		\vertex [label=below: ](4) at (4.05, 2.2) {};
		\vertex [label=below: ](5) at (5.25, 2.5) {};

		\Edge(g1)(g2)
		\Edge(u)(v)
		\Edge(u)(w)
		\Edge(v)(w)
		\Edge(u)(g2)
		\Edge(g1)(xj)
		\Edge(x3)(x2)
		\Edge(x2)(x1)
		\Edge(x1)(w)
		\Edge(u)(g1)
		\Edge(g1)(2)
		\Edge(xj)(2)
		\Edge(x3)(3)
		\Edge(x2)(3)
		\Edge(x2)(4)
		\Edge(x1)(4)
		\Edge(x1)(5)
		\Edge(w)(5)

		\end{tikzpicture}
	\end{center}
	
	Suppose now that $G'$ does not satisfy (b). We show that $G'$ satisfies (3). Since $G'$ does not satisfy (b), then there exists a vertex $z\in V(G')$ of degree 2  whose neighbors $z'$ and $z''$ are not adjacent. By contracting the edge $\{z,z'\}$, we obtain a graph $H$ from $G'$ on $n-3$ vertices with $T(H)=\left\lceil\frac{n-3}{2}\right\rceil$. So, $H\in \mathcal{F}$. Then for the triangle  $(z',z'', y)$ in $H$, it follows that $(z',z,z'',y)$ is a 4 cycle in $G'$, and $G'$  satisfies (3) of Theorem \ref{outerplanareven} with $G^{[0]}=C_4$. 
\end{proof}

{\em Proof of Theorem} \ref{outerplanareven}. Let $G$ be a graph on $n=2k$ vertices and first suppose $T(G)=\frac{n}{2}$. If $G$ has a leaf $v$, then $T(G)=T(G-v)$, and $G-v$ is in $\mathcal{F}$ by Theorem  \ref{outerplanarodd}. Thus (1) holds. If $G$ has a bridge $e$ and the connected components of $G-e$ are $G_1$ and $G_2$, then by Lemma \ref{treebridge}, $T(G)=T(G-e)-1=T(G_1)+T(G_2)-1.$ Note that $|G_1|$ and $|G_2|$ must both be even or both are odd since $n$ is even. If both are even then $T(G)=T(G_1)+T(G_2)-1\leq \frac{|G_1|}{2}+\frac{|G_2|}{2}-1=\frac{n}{2}-1$, which contradicts $T(G)=\frac{n}{2}$. Thus, $|G_1|$ and $|G_2|$ are both odd, and $\frac{n}{2}=T(G)=T(G_1)+T(G_2)-1\leq \left\lceil\frac{|G_1|}{2}\right\rceil+\left\lceil\frac{|G_2|}{2}\right\rceil-1=\frac{|G_1|+1}{2}+\frac{|G_2|+1}{2}-1=\frac{n}{2}.$ It follows that $T(G_i)=\left\lceil\frac{|G_i|}{2}\right\rceil$ for $i=1,2$, so (2) holds. Suppose $G$ can be obtained from some graph $G'$ by subdividing an edge of $G'$. Since subdividing an edge does not change the tree cover number, then $\frac{n}{2}=T(G)=T(G')$ and by Theorem \ref{outerplanarodd}, $G'\in \mathcal{F}$. Note that subdividing an edge of a graph in $\mathcal{F}$ results in a graph in (3) with $G^{[0]}=C_4$, so $G$ satisfies (3).

We may now assume that $\delta(G)\geq 2$, $G$ does not have a bridge, and for each $v\in V$ with $\deg(v)=2$, the neighbors of $v$ are adjacent. For the remainder of the proof, $u$ and $v$ are the adjacent vertices from Lemma \ref{lem1} such that $G'=G[V(G)\setminus \{u,v\}]$ is connected and $T(G')=\frac{n-2}{2}$. We consider two cases, $G'$ has a leaf and $G'$ does not have a leaf.

Case 1. Suppose $G'$ has a leaf $\ell$ and let $\ell'$ be its neighbor. We show $G$ satisfies (3). We first show that $u$ and $v$ have the same set of neighbors in $V(G')\setminus\{l,l'\}$. Note that $T(G'-\ell)=T(G')=\left\lceil\frac{n-3}{2}\right\rceil$ and by Theorem \ref{outerplanarodd}, $G'-\ell$ is in $\mathcal{F}$. Suppose $u$ has a neighbor $w$ in $V(G')\setminus\{l,l'\}$ and $v$ is not adjacent to $w$. Let $\mathcal{T'}$ be a tree cover of $G'$ such that each tree has exactly two vertices ($\mathcal{T'}$ is gauranteed by Lemma \ref{lem1}) and let $T_w=\{w,x\}$ be the tree containing $w$. Since $G'-\ell\in \mathcal{F},$ then $w$ and $x$ have a common neighbor $y$ such that $V(T_y)=\{y,z\}\in \mathcal{T'}$ and $z\notin N(x)$. By Lemma \ref{lem3}, $T(G) \leq \frac{n-2}{2}$, contradicting $T(G)=\frac{n}{2}$. Therefore $u$ and $v$ have the same set of neighbors in $V(G')\setminus\{\ell,\ell'\}$, and since $G$ is outerplanar, this set has cardinality at most one. 

Since $G$ has no leaves, we may assume that $u$ is adjacent to $\ell$. Suppose first that $v$ is also adjacent to $\ell$. Since $\{\ell,\ell'\}$ is not a bridge in $G$, then either $u$ or $v$ must have a neighbor in $G'-\ell$, and since $G$ is outerplanar, $u$ and $v$ cannot both have a neighbor in $G'-\ell$ (since if we contract each edge of $G'-\ell$ to obtain a single vertex $t$, then the graph induced on $\{u,v,\ell,t\}$ would form a $K_4$). Without loss of generality, we let $u$ have a neighbor $w \in V(G'-\ell)$. Since $u$ and $v$ have the same neighbors in $V(G')\setminus \{\ell,\ell'\},$ then $w=\ell'$, $G[\{u,v,\ell,\ell'\}]$ is $K_4-e$, and $G$ satisfies (3) with $G^{[0]}=K_4-e.$  

Assume $v$ is not adjacent to $\ell$. Since $\ell$ has degree 2 in $G$, by hypothesis $u$ is adjacent to $\ell'$, and since $G$ has no leaves, $v$ has a neighbor $w\in V(G'-\ell)$. If $w\neq \ell'$, we have already seen that $u$ must also be adjacent to $w$ and that $N(u)\cap (V(G)\setminus \{\ell,\ell'\})=N(v)\cap (V(G)\setminus \{\ell,\ell'\})=\{w\}.$ Also note that if $w\neq \ell'$, then $v$ cannot also be adjacent to $\ell'$ since $G$ is outerplanar, so $N(u)=\{v,\ell,\ell',w\}$ and $N(v)=\{u,w\}.$ To see that $G$ satisfies (3), let $(\ell',x_1,\ldots,x_j,w)$ be the shortest path from $\ell'$ to $w$ in $G'$ (see the next figure). Since $G'-\ell \in \mathcal{F}$, it follows that $G$ satisfies (3) with $G^{[0]}=C_r^{\triangle}$ and $C_r=(u,\ell,\ell',x_1,\ldots,x_j,w)$. 

\begin{center}
	\begin{tikzpicture}[scale=1.2]
	\GraphInit[vstyle=Classic]
	
	\node [label=left: $G'-\ell\in \mathcal{F}$] [draw, ellipse, minimum height=3.5cm, minimum width=9cm] (elli) at (3.4971, 2.0) {};

	\vertex [label=above: $v$] (v) at (3.6971, 5.0) {};
	\vertex [label=above: $u$] (u) at (1.8971, 5.0) {};
	\vertex [label=above: $x_j$](xj) at (1.6971, 3.0) {}; 
	\vertex [label=right: $\cdots$](xj) at (1.6971, 3.0) {}; 
	\vertex [label=above: $x_3$](x3) at (2.4971, 3.0) {}; 
	\vertex [label=above: $x_2$](x2) at (3.4971, 3.0) {}; 
	\vertex [label=above: $x_1$](x1) at (4.4971, 3.0) {}; 
	\vertex [label=above: $w$](w) at (5.4971, 3.) {};
	\vertex [label=below: $\ell'$](g1) at (.8971, 2.979) {}; 
	\vertex [label=above: $\ell$](g2) at (0.12, 2.979) {}; 
	\vertex [label=below: ](2) at (1.35, 2.2) {};
	\vertex [label=below: ](3) at (3.05, 2.2) {};
	\vertex [label=below: ](4) at (4.05, 2.2) {};
	\vertex [label=below: ](5) at (5.25, 2.2) {};
	\vertex [label=below: ](21) at (.85, 1.6) {};
	\vertex [label=below: ](22) at (1.65, 1.6) {};
	\vertex [label=below: ](222) at (1.65, 1.0) {};
	\vertex [label=below: ](2222) at (2.25, 1.2) {};
	\vertex [label=below: ](22222) at (2.05, 2.5) {};
	\vertex [label=below: ](222222) at (2.05, 2.0) {};
	\vertex [label=below: ](44) at (4.55, 1.6) {};
	\vertex [label=below: ](444) at (3.95, 1.6) {};

	\Edge(g1)(g2)
	\Edge(u)(v)
	\Edge(u)(w)
	\Edge(v)(w)
	\Edge(u)(g2)
	\Edge(g1)(xj)
	\Edge(x3)(x2)
	\Edge(x2)(x1)
	\Edge(x1)(w)
	\Edge(u)(g1)
	\Edge(g1)(2)
	\Edge(xj)(2)
	\Edge(x3)(3)
	\Edge(x2)(3)
	\Edge(x2)(4)
	\Edge(x1)(4)
	\Edge(x1)(5)
	\Edge(w)(5)
	\Edge(2)(21)
	\Edge(2)(22)
	\Edge(21)(22)
	\Edge(22)(222)
	\Edge(22)(2222)
	\Edge(222)(2222)
	\Edge(2)(22222)
	\Edge(2)(222222)
	\Edge(22222)(222222)
	\Edge(4)(44)
	\Edge(4)(444)
	\Edge(44)(444)

	\end{tikzpicture}
\end{center}

If $v$ is adjacent to $\ell'$, then $u$ and $v$ share the same set of neighbors in $V(G')\setminus \{\ell\}$, and since $G$ is outerplanar we must have that $N(u)=\{v,\ell,\ell'\}, N(v)=\{u,\ell'\}$ (see the next figure). Thus, $G$ satisfies (3) with $G^{[0]}=K_4-e$.

\begin{center}
	\begin{tikzpicture}[scale=1.2]
	\GraphInit[vstyle=Classic]
	
	\node [label=left: $G'-\ell\in \mathcal{F}$] [draw, ellipse, minimum height=3.5cm, minimum width=9cm] (elli) at (3.4971, 2.0) {};

	\vertex [label=above: $v$] (v) at (3.6971, 5.0) {};
	\vertex [label=above: $u$] (u) at (1.8971, 5.0) {};
	\vertex [label=above: $x_j$](xj) at (1.6971, 3.0) {}; 
	\vertex [label=right: $\cdots$](xj) at (1.6971, 3.0) {}; 
	\vertex [label=above: $x_3$](x3) at (2.4971, 3.0) {}; 
	\vertex [label=above: $x_2$](x2) at (3.4971, 3.0) {}; 
	\vertex [label=above: $x_1$](x1) at (4.4971, 3.0) {}; 
	\vertex [label=above: $w$](w) at (5.4971, 3.) {};
	\vertex [label=below: $\ell'$](g1) at (.8971, 2.979) {}; 
	\vertex [label=above: $\ell$](g2) at (0.12, 2.979) {}; 
	\vertex [label=below: ](2) at (1.35, 2.2) {};
	\vertex [label=below: ](3) at (3.05, 2.2) {};
	\vertex [label=below: ](4) at (4.05, 2.2) {};
	\vertex [label=below: ](5) at (5.25, 2.2) {};
	\vertex [label=below: ](21) at (.85, 1.6) {};
	\vertex [label=below: ](22) at (1.65, 1.6) {};
	\vertex [label=below: ](222) at (1.65, 1.0) {};
	\vertex [label=below: ](2222) at (2.25, 1.2) {};
	\vertex [label=below: ](22222) at (2.05, 2.5) {};
	\vertex [label=below: ](222222) at (2.05, 2.0) {};
	\vertex [label=below: ](44) at (4.55, 1.6) {};
	\vertex [label=below: ](444) at (3.95, 1.6) {};

	\Edge(g1)(g2)
	\Edge(u)(v)
	\Edge(v)(g1)
	\Edge(u)(g2)
	\Edge(g1)(xj)
	\Edge(x3)(x2)
	\Edge(x2)(x1)
	\Edge(x1)(w)
	\Edge(u)(g1)
	\Edge(g1)(2)
	\Edge(xj)(2)
	\Edge(x3)(3)
	\Edge(x2)(3)
	\Edge(x2)(4)
	\Edge(x1)(4)
	\Edge(x1)(5)
	\Edge(w)(5)
	\Edge(2)(21)
	\Edge(2)(22)
	\Edge(21)(22)
	\Edge(22)(222)
	\Edge(22)(2222)
	\Edge(222)(2222)
	\Edge(2)(22222)
	\Edge(2)(222222)
	\Edge(22222)(222222)
	\Edge(4)(44)
	\Edge(4)(444)
	\Edge(44)(444)

	\end{tikzpicture}
\end{center}

Case 2: Suppose $G'$ does not have a leaf. We prove this case by induction on $n$. Let $n=6$. Then $G'$ is a graph on four vertices with tree cover number two. Since $G'$ does not have a leaf, then $G'$ is $K_4-e$ or $C_4$. 

Suppose first that $G'=C_4$. If $u$ has a neighbor $w\in V(C_4)$ and $v$ is not adjacent to $w$, then for $T_1=G[\{u,v,w\}]$ and $T_2=G[V(C_4)\setminus\{w\}]$, $\{T_1,T_2\}$ is a tree cover of $G$ of size 2, contradiction $T(G)=3$. So $u$ and $v$ have the same set of neighbors in $V(C_4)$, and since $G$ is outerplanar, $u$ and $v$ have exactly one neighbor in $V(C_4)$ (see next figure), and (3) holds.

\begin{center}
	\begin{tikzpicture}[scale=1.2]
	\GraphInit[vstyle=Classic]

	\vertex [label=above: $v$] (v) at (1.6971, 4.0) {};
	\vertex [label=above: $u$] (u) at (0.8971, 4.0) {};
	\vertex [label=below: ](2) at (1.6971, 3.0) {}; 
	\vertex [label=below: ](1) at (.8971, 2.979) {};
	\vertex [label=below: ](3) at (1.6971, 2.0) {}; 
	\vertex [label=below:](4) at (.8971, 1.979) {};

	\Edge(1)(2)
	\Edge(2)(3)
	\Edge(3)(4)
	\Edge(4)(1)
	\Edge(u)(1)
	\Edge(v)(1)
	\Edge(u)(v)
	
	\end{tikzpicture}
\end{center}
Consider $G'=K_4-e$. It is well known that an outerplanar graph on $n$ vertices has at most $2n-3$ edges (this can be proven by deleting a vertex of degree two and using induction on $n$). Thus $G$ has at most nine edges. Since there are five edges in $K_4-e$ and one edge between $u$ and $v$, there are at most three edges between the sets $\{u,v\}$ and $V(K_4-e)$, so either $u$ or $v$ has degree two (since $G$ has no leaves). Suppose $N(u)=\{v,w\}$ for some $w\in V(K_4-e)$. By hypothesis, $v$ and $w$ are adjacent. Note that since $G$ has at most nine edges, $v$ can have at most one additional neighbor.  Suppose $v$ has an additional neighbor in $V(K_4-e)$. Then $G$ is one of the graphs given in the next figure, and $T(G)=2$, contradicting $T(G)=3$. Thus, $v$ has no additional neighbors, and $G$ satisfies (3) with $G^{[0]}=K_4-e$.

\begin{center}
	\begin{tikzpicture}[scale=1.2]
	\GraphInit[vstyle=Classic]

	\vertex [label=above: $v$] (v1) at (4.6971, 4.0) {};
	\vertex [label=above: $u$] (u1) at (3.8971, 4.0) {};
	\vertex [label=below: ](21) at (4.6971, 3.0) {}; 
	\vertex [label=left: $w$ ](11) at (3.8971, 2.979) {};
	\vertex [label=below: ](31) at (4.6971, 2.0) {}; 
	\vertex [label=below:](41) at (3.8971, 1.979) {};

	\Edge(11)(21)
	\Edge(21)(31)
	\Edge(31)(41)
	\Edge(41)(11)
	\Edge(u1)(11)
	\Edge(v1)(11)
	\Edge(u1)(v1)
	\Edge(21)(41)
	\Edge(v1)(21)

	\vertex [label=above: $v$] (v) at (1.6971, 4.0) {};
	\vertex [label=above: $u$] (u) at (0.8971, 4.0) {};
	\vertex [label=below: ](2) at (1.6971, 3.0) {}; 
	\vertex [label=left:$w$ ](1) at (.8971, 2.979) {};
	\vertex [label=below: ](3) at (1.6971, 2.0) {}; 
	\vertex [label=below:](4) at (.8971, 1.979) {};

	\Edge(1)(2)
	\Edge(2)(3)
	\Edge(3)(4)
	\Edge(4)(1)
	\Edge(u)(1)
	\Edge(v)(1)
	\Edge(u)(v)
	\Edge(1)(3)
	\Edge(v)(2)
	
	\end{tikzpicture}
\end{center}

Let $n\geq 8$. Since $G'$ has no leaves, by Lemma \ref{lem2} we either have that $G$ satisfies (3) (in which case the proof is complete), $G'$  has no bridge and is not a subdivision, or $G'$ satisfies (3). 

Suppose that $G'$  has no bridge and is not a subdivision.  We show that $G'$ satisfies (3). Let $G''$ be the graph obtained from $G'$ after one more application of Lemma \ref{lem1}. If $G''$ has a leaf, $G'$ satisfies (3) by case 1. If $G''$ does not have a leaf, then by the induction hypothesis $G'$ satisfies (3). 

We now use the fact that $G'$ satisfies (3) to show that $G$ satisfies (3) by showing that $N(u)=\{v,w\}$ and $N(v)=\{u,w\}$, for some $w\in V(G')$ (i.e., $G=K_3\oplus_w G'$). Since $u$ is not a leaf, let $w\in V(G')$ be a neighbor of $u$ and suppose first that $v$ is not adjacent to $w$. We show that this contradicts $T(G)=\frac{n}{2}$. Let $\mathcal{T'}$ be a minimum tree cover of $G'$ with each tree having exactly two vertices and let $V(T_w)=\{w,x\}.$ We consider two cases, there exists $y\in N(w)\cap N(x)$ and $ N(w)\cap N(x)=\emptyset.$  Let $y\in N(w)\cap N(x)$ and let $V(T_y)=\{y,z\}$. If $x$ is not adjacent to $z$, by Lemma \ref{lem3}, $T(G)\leq \frac{n-2}{2}$, so $x$ is adjacent to $z$ and $G[\{w,x,y,z\}]$ is $K_4-e$. Since $G$ is outerplanar and $v$ is not adjacent to $w$, then it can be seen by examination that $G[\{u,v,w,x,y,z\}]$ can be covered with two trees, contradicting $T(G)=\frac{n}{2}$.

Consider $N(w)\cap N(x)=\emptyset$. Note that if $G'$ satisfies (3) with $G^{[0]}\in \{K_4-e, C_r^{\triangle}\}$, then every edge of $G'$ would belong to a triangle, so $N(w)\cap N(x)=\emptyset$ implies that $G'$ satisfies (3) with $G^{[0]}=C_4$. Furthermore, every edge of $G'$ that is not an edge of $C_4$ belongs to a triangle, so $\{w,x\}$ is an edge on $C_4$. Since $v$ is not adjacent to $w$, we may cover $G[V(C_4)\cup\{u,v\}]$ with two trees, contradicting $T(G)=\frac{n}{2}$. So, $v$ must be adjacent to $w$, which shows that $u$ and $v$ have the same set of neighbors on $G'$. Since $G$ is outerplanar, $u$ and $v$ must have exactly one common neighbor in $G'$, which shows that $G$ satisfies (3). 

We now show the converse. The removal of a leaf does not affect the tree cover number of a graph, so if $G$ satisfies (1), then $T(G)=\frac{n}{2}$. If $G$ satisfies (2), then by Lemma \ref{treebridge}, $T(G)=T(G_1)+T(G_2)-1=\left\lceil\frac{|G_1|}{2}\right\rceil +\left\lceil\frac{|G_2|}{2}\right\rceil-1=\frac{n}{2}$.  For a graph $G$ satisfying (3), $\frac{n}{2}=T(G')=T(G)$ since subdividing does not affect tree cover number. Suppose $G$ satisfies (3). If $G\in \{C_4, K_4-e, C_r^{\triangle}\},$ then $T(G)=\frac{n}{2}$. Let $G=G^{[k]}$ for some $k\geq 1$. By Proposition \ref{cutvertextree}, and by induction, $T(G)=T(G^{[k-1]})+T(K_3)-1=\frac{n}{2}$.\qed

		\bibliographystyle{alpha}

	\end{document}